\newcommand{\Uad}{U_\text{ad}}
\newcommand{\R}{\mathbb R}
\newcommand{\norm}[1]{\left\lVert#1\right\rVert}
\newtheorem{assumption}{Assumption}
\newtheorem{definition}{Definition}
\newtheorem{theorem}{Theorem}
\newtheorem{remark}{Remark}
\newtheorem{proposition}{Proposition}
\newtheorem{lemma}{Lemma}
\title{Gradient-based parameter calibration of an anisotropic interaction model for pedestrian dynamics}
\author{Zhomart Turarov$^1$, Claudia Totzeck$^2$}
\date{%
	$^1$TU Kaiserslautern, e-mail: \texttt{turarov@mathematik.uni-kl.de}\\%
	$^2$BU Wuppertal, e-mail: \texttt{totzeck@uni-wuppertal.de}\\[2ex]%
	\today
}
\begin{document}
     
\maketitle

\begin{abstract}
We propose an extension of the anisotropic interaction model which allows for collision avoidance in pairwise interactions by a rotation of forces \cite{Claudia} by including the agents' body size. The influence of the body size on the self-organization of the agents in channel and crossing scenarios as well as the fundamental diagram is studied. Since the model is stated as a coupled system of ordinary differential equations, we are able to give a rigorous well-posedness analysis. Then we state a parameter calibration problem that involves data from real experiments. We prove the existence of a minimizer and derive the corresponding first-order optimality conditions. With the help of these conditions we propose a gradient descent algorithm based on  mini-batches of the data set. We employ the proposed algorithm to fit the parameter of the collision avoidance and the strength parameters of the interaction forces to given real data from experiments. The results underpin the feasibility of the method. 
\end{abstract}

\section{Introduction}
Mathematical modelling of pedestrian dynamics is of practical benefit in civil engineering \cite{Khelfa2021,Schadschneider2018,Schadschneider_etal_2009} for example in the design of complex architectures, e.g., stadiums, city centers, shopping malls, or for the management of large public events like festivals, concerts, pilgrimages, or manifestations \cite{Chraibi2018}. 
Capturing both, the individual and collective behaviours in pedestrian dynamics, is rather complex \cite{Bellomo2022,Castellano2009}. Many different approaches have been proposed in the literature: for example models based on magnetic forces proposed by S. Okazaki and S. Matsushita in which pedestrians are modeled as magnetic charges in a magnetic field \cite{Teknomo_2016}; the gas-kinetic model which treats pedestrians as molecules in liquefied gas \cite{Hoogendoorn_2001}; % In this model the pedestrian dynamics and velocity are unknown, instead, the statistical distribution of particles is known based on the Boltzmann equation ;  Another quick way to simulate pedestrian traffic is 
cellular automata \cite{BurgerWolfram,Burstedde_Klauck_2001,Dijkstra_2001}; models incorporating anticipative, rational behaviour \cite{Bailo2018,Christiani2015,Degond2013} and (smooth) sidestepping \cite{Festa2018,Claudia}. Another group of pedestrian models has emerged from the pioneering work on social forces \cite{Helbing_Molnar_1995} and can be classified as force-based \cite{Charibi2011,Claudia} and the overview given in \cite{Chen2018}. 

Most of these models share the property of being able to reproduce collective features such as lane formation in counterflow scenarios and travelling waves in crossing flows. Moreover, they can be used to study evacuation scenarios. On the other hand they strongly differ in their description. Indeed, some models, for the example the class of forces-based models have a sound mathematical description and allow for a statement in terms of a closed system of ordinary or partial differential equations. Others have a rather algorithmic structure because they require the solution of optimization problems to estimate for example the time-to-collision in very iteration. For the latter, a rigorous mathematical study of well-posedness is difficult.

Naturally, the modelling process is followed by an optimization or calibration procedure. For pedestrian dynamics the optimization of buildings, evacuation routes and traffic safety or the minimization of the occurence of high densities are of special interest \cite{Michael_J_S,Zhou2019}. Moreover, with the collection of data from real world experiments grew the interest in parameter fitting for the different pedestrian models \cite{Bode,Susana,Goettlich2021}.  

This work is in a similar spirit. First, we extend the anistropic model proposed in \cite{Claudia} by incorporating a body size for the interacting agents. This induces another dimension of volume exclusion in the model and aims at making the model more realistic. We study the influence of the body size on the formation of lanes and travelling waves as well as the fundamental diagram of the dynamics numerically and provide a rigorous study of the well-posedness of the interaction dynamics with and without body size employing standard ODE theory.
The second part of the article is concerned with the rigorous derivation of an gradient-based parameter calibration algorithm. We begin with the derivation of the first-order optimality system and propose a mini-batch gradient-descent algorithm for the calibration problem. 

In more detail, the article is organized as follows: the anisotropic model for pedestrian dynamics is extended by including the agents' body size in Section~\ref{sec:Microscopic_model}. Moreover, we study the influence of the body size on the collective behaviour and the fundamental diagram in there. Section~\ref{sec:par_calibration} is devoted to the parameter calibration problem. We begin with the statement of the problem, investigate its well-posedness and derive the corresponding first-order optimality conditions. Finally, the iterative gradient descent algorithm based on mini-batches for the parameter calibration is proposed in Section~\ref{sec:calibration_results}, where we show results obtained with this algorithm. We conclude with a summary of the results and an outlook on future projects.

\section{Microscopic model with body size}
\label{sec:Microscopic_model}
We include the body size in the  anisotropic model proposed in \cite{Claudia} as follows: let us consider a second order equation of motion with $N \in  \mathbb{N} $ agents. Their positions and velocities are denoted by  $x_{i}:[0,T] \rightarrow \mathbb{R}^2 $ and $v_{i}:[0,T] \rightarrow  \mathbb{R}^2, \  i=1,...,N.$ Moreover, the agents are assumed to have a body diameter $d>0$. 
This leads to the following interaction dynamics
\begin{subequations}
\label{eqn:state_equation}
\begin{align}
\label{eqn:state_equation_a}
\frac{d}{dt}x_{i}&=v_{i}, \\
\label{eqn:state_equation_b}
\frac{d}{dt}v_{i}&=   \tau\left( w_{i} - v_{i} \right) - \frac{1}{N} \sum_{j\neq i}M\left( v_{i}, v_{j}\right)K\left( d, x_{i}, x_{j}, v_{i}, v_{j}\right), \\
& {x_{i}(0) = x_0^i}, \quad  {v_{i}(0) = v_0^i}, \quad {i=1,...,N},
\end{align}
\end{subequations}
where  $K\left( d, x_{i}, x_{j}, v_{i}, v_{j}\right): \mathbb{R}^D \times \mathbb{R}^D \times \mathbb{R}^D \times \mathbb{R}^D \rightarrow \mathbb{R}^D$ is a pairwise interaction force between the agents $i$ and $j$. The rotation matrix $M\left( v_{i}, v_{j}\right)$ changes the direction of the interaction force. It reads
\begin{align}
\label{eqn:rotation_matrix}
M\left( v_{i}, v_{j}\right) = \begin{pmatrix} 
\cos\alpha_{ij} & -\sin\alpha_{ij} \\
\sin\alpha_{ij} & \cos\alpha_{ij} 
\end{pmatrix}, 
\quad 
\alpha_{ij} = 
\begin{cases}
\lambda \arccos  \frac{v_{i}\cdot v_{j}}{ \norm{v_{i} }\norm{v_{j} } }, & \text{if } v_i\neq 0, \ v_j\neq 0, \\
0,                                                                      & \text{else}
\end{cases}.  
\end{align}
In addition, the model includes a relaxation parameter $ \tau > 0$ which controls the adaption of the current velocity $v_i$ towards the given desired velocity $w_i \in \mathcal C([0,T],\R^2)$. The rotation of the force vectors induced by the matrix $M$ models a collision avoidance behaviour of the agents. The direction of the collision avoidance is controlled by the sign of the parameter $\lambda$. For $\lambda >0$ agents move to the right, to avoid a collison, for $\lambda <0$ the movement is directed to the left. See \cite{Claudia} for further details.

For notational convenience, the solution of the system is expresses by the vectors ${\mathbf x(t)=(x_1(t), ...,x_N(t))\in \mathbb{R}^{ND} }$ and 
$ \mathbf v(t)=(v_1(t), ...,v_N(t))\in \mathbb{R}^{ND} $ for $t\in[0,T].$

\begin{remark}
We can easily include obstacles or walls in the model, by describing them as artifical agents with fixed positions and fixed velocities and adding an additional interaction term similar to the interaction of the agents in \eqref{eqn:state_equation_b}.
\end{remark}

\subsection{Well-posedness}
In this section we study the well-posedness of the dynamics given in \eqref{eqn:state_equation}. We make the following assumptions on the interaction force $K\left( d, x_{i}, x_{j}, v_{i}, v_{j}\right)$ with $ i,j \in \{1,\dots,N\}.$ 

\begin{assumption}
	\label{as:well-posedness_1}
	The  interaction forces  $K\left(d,x_{i}, x_{j}, v_{i}, v_{j}\right)$  are locally Lipschitz and globally bounded w.r.t.~the positions $x_i, x_j$ and the velocities $v_i, v_j$.
\end{assumption}
\begin{assumption}
	\label{as:well-posedness_2}
	The gradients of interaction forces $\nabla K\left(d,x_{i}, x_{j}, v_{i}, v_{j}\right)$  exist, are locally Lipschitz and globally bounded w.r.t.~the positions $x_i, x_j$ and the velocities $v_i, v_j$.
\end{assumption}
\begin{remark}
Note that the first assumption is necessary to show to well-posedness of \eqref{eqn:state_equation} while we need the second assumption later on to obtain the well-posedness of the calibration problem.
\end{remark}

A key step to derive the well-posedness of the system is to establish the Lipschitz property of the right-hand side. In particular, the rotation of the force vector is of interest.
\begin{lemma}\label{lem:LipInteraction}
For the rotation of the force term with $v_i, v_j, v_k,v_\ell \in \R^2$ and $d\ge0$ it holds
\begin{align*}
| M\left( v_{i}, v_{j}\right) & K\left( d, x_{i}, x_{j}, v_{i}, v_{j}\right) - M\left( v_{k}, v_{\ell}\right)K\left( d, x_{k}, x_{\ell}, v_{k}, v_{\ell}\right) | \\
&\le L_1| (v_i,v_j) - (v_k,v_\ell) |  + L_2 | (x_i,x_j) - (x_k,x_\ell) | 
\end{align*}
for some Lipschitz constants $L_1, L_2>0.$
\end{lemma}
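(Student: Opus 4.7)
The natural strategy is to introduce a mixed intermediate term and split the difference via the triangle inequality:
\begin{align*}
M(v_i,v_j)K(d,x_i,x_j,v_i,v_j) - M(v_k,v_\ell)K(d,x_k,x_\ell,v_k,v_\ell)
&= \bigl[M(v_i,v_j) - M(v_k,v_\ell)\bigr] K(d,x_i,x_j,v_i,v_j) \\
&\quad + M(v_k,v_\ell)\bigl[K(d,x_i,x_j,v_i,v_j) - K(d,x_k,x_\ell,v_k,v_\ell)\bigr].
\end{align*}
I would treat the two summands separately.

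The second summand is the easier one. Since $M(v_k,v_\ell)$ is a rotation matrix, its operator norm equals $1$, so the factor can simply be dropped. Assumption~\ref{as:well-posedness_1} then yields a bound of the form $C_K\,(|(x_i,x_j)-(x_k,x_\ell)| + |(v_i,v_j)-(v_k,v_\ell)|)$ for the remaining difference, using that the velocities and positions we consider lie in a bounded set (on any compact time interval, trajectories of \eqref{eqn:state_equation} remain bounded, so the local Lipschitz constant of $K$ is uniform). This immediately contributes to both $L_1$ and $L_2$.

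For the first summand I would use the global boundedness of $K$ from Assumption~\ref{as:well-posedness_1} to reduce the problem to showing that the rotation matrix $M$ itself is Lipschitz in $(v_i,v_j)$, with the resulting constant contributing only to $L_1$. Since $M$ depends on its arguments only through $\cos\alpha_{ij}$ and $\sin\alpha_{ij}$, it is enough to establish Lipschitz continuity of these two scalar-valued maps. Writing $\cos\alpha_{ij} = \cos\!\bigl(\lambda\arccos\tfrac{v_i\cdot v_j}{\|v_i\|\|v_j\|}\bigr)$ and similarly for $\sin\alpha_{ij}$, I would compute the derivative with respect to $v_i$ (and symmetrically $v_j$), using the chain rule; the apparent singularity of $\arccos'$ at $\pm 1$ is compensated by the factors $\sin(\lambda\arccos(\cdot))$ appearing through the outer differentiation, which vanish at the same points, so the derivative extends to a bounded function of $(v_i,v_j)$ away from zero. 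The points $v_i=0$ or $v_j=0$ are handled by the separate case in \eqref{eqn:rotation_matrix} (where $M$ is the identity) together with a direct continuity argument at those configurations.

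The main obstacle is this last step, controlling the rotation matrix $M$ near the degenerate configurations (zero velocities, and parallel/antiparallel velocities). I expect to handle both by exploiting cancellations inside $\cos(\lambda\arccos(\cdot))$ and $\sin(\lambda\arccos(\cdot))$ rather than estimating the angle $\alpha_{ij}$ directly; alternatively one may argue componentwise via the mean value theorem on the smooth parts and a separate uniform estimate at the zero-velocity set. Once the Lipschitz constant $L_M$ of $M$ in $(v_i,v_j)$ is established, combining with the boundedness of $K$ gives $L_1 = L_M \sup|K| + C_K$ and $L_2 = C_K$, completing the proof.
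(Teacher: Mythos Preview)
Your decomposition and the handling of the second summand match the paper's proof exactly: the same add-and-subtract splitting, the unit operator norm of the rotation, and Assumption~\ref{as:well-posedness_1} for the Lipschitz property of $K$. The paper likewise reduces the first summand to a bound on $\nabla M$, writing $|M(v^1)-M(v^2)|\le \bigl|\int_0^1 \nabla M(v^2+s(v^1-v^2))\,ds\bigr|\,|v^1-v^2|$, and the substance of its argument (together with Appendix~\ref{app:AppendixB}) is the boundedness of $\nabla M$.

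The mechanism you propose for this bound near $t=\pm 1$ (with $t=\tfrac{v_i\cdot v_j}{\|v_i\|\|v_j\|}$) is, however, not the one that works. You claim the singularity of $\arccos'(t)$ is compensated by the outer factor $\sin(\lambda\arccos t)$, ``which vanish at the same points''. But at $t=-1$ this factor equals $\sin(\lambda\pi)$, which is nonzero for generic $\lambda\in[-1,1]$; and for the entry $\sin\alpha_{ij}$ the outer factor becomes $\cos(\lambda\arccos t)$, equal to $1$ at $t=1$. So the outer differentiation does not supply the needed cancellation. What actually saves the bound---and what the paper uses---is the cancellation between $\arccos'(t)=-1/\sqrt{1-t^2}$ and the \emph{inner} derivative $\partial_{v_i}t$: the latter equals $\tfrac{1}{\|v_i\|\,\|v_j\|}$ times the component of $v_j$ orthogonal to $v_i$, hence has norm $\sqrt{1-t^2}/\|v_i\|$, and the two square roots cancel to give $\bigl|\partial_{v_i}\alpha\bigr|=|\lambda|/\|v_i\|$. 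The paper verifies precisely this identity in polar coordinates in Appendix~\ref{app:AppendixB}. With that in hand your argument goes through, so the plan is recoverable, but the cancellation lives one layer deeper in the chain rule than where you have located it.
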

\begin{proof}
	 We introduce the short hand notation $$v^1 = \left( v_{i}, v_{j}\right), \  v^2 = \left( v_{k}, v_{\ell}\right), \quad x^1 = \left( x_{i}, x_{j}\right), \ {x^2 = \left( x_{k}, x_{\ell} \right)}. $$ In the following we omit the dependence on $d$. Hence we rewrite the left-hand side of the Lipschitz condition as 
	\begin{align}
		\label{eqn:ineq}
			 |M(v^1)K(x^1,& v^1) - M(v^2)K(x^2, v^2)| \\
			&\leq  |K(x^1, v^1)|_\infty |M(v^1) - M(v^2) | + |M(v^2)|_\infty\, |K(x^1, v^1) -  K(x^2,v^2) |. \nonumber
	\end{align}
	We estimate the first term on the right-hand side of \eqref{eqn:ineq} as
	\begin{equation}
	|M(v^1) - M(v^2)| \leq \left| \int_{0}^{1} \nabla M(v^2 + s(v^1 - v^2) ) ds \right| \left|v^1 - v^2\right|,
	\end{equation}
	where
	\begin{equation*}
		\nabla M(v^1 ) = \left(\frac{dM}{d {v}_i}(v^1 ), \frac{dM}{d {v}_j}(v^1)\right),
	\end{equation*}
	with
	\begin{gather*}
		\label{eqn:dif_rotation_matrix}
		\frac{dM}{d {v}_i}= \begin{pmatrix} 
			-\sin\alpha & -\cos\alpha \\
			\cos\alpha & -\sin\alpha 
		\end{pmatrix} \cdot \frac{d\alpha}{d {v}_i} , \\
		\ \frac{d\alpha}{d {v}_i} = 
		\begin{cases}
			-\lambda  \frac{1}{ \sqrt{(\norm{ {v_i} }\norm{{v_j} } ) ^2 - \langle {v_i}, {v_j} \rangle^2}}\left( {v_j} - \langle {v_i}, {v_j} \rangle \frac{{v_i}}{\norm{{v_i}}^2}\right) , & \text{if } {v_i}, {v_j}\neq 0 \\
			0,                                                                      & \text{else}
		\end{cases}  .
	\end{gather*}
	Analogously, we derive $\frac{dM}{d {v}_j}$. Each element of $\nabla M(v^2 + s(v^1 - v^2) ) $ is bounded, for a detailed proof of the boundedness see Appendix $\ref{app:AppendixB}$. Note that $K$ is globally bounded by Assumption \ref{as:well-posedness_1}. Moreover, $K$ is locally Lipschitz by Assumption \ref{as:well-posedness_1} which allows us to estimate the second term on the right-hand side of \eqref{eqn:ineq}. Altogether, this proves the existence of the Lipschitz constants $L_1 \text{ and } L_2 $ as desired.
\end{proof}

 \begin{theorem}[Existence and Uniqueness]
 	\label{lem:uniq_sol_st_eq}
 Let Assumption \ref{as:well-posedness_1} hold. Further we assume $w_i\in \mathcal C([0,T],\R^2), i=1,\dots,N$ and $A,R \ge 0, a,r >0$ and $\lambda \in [-1,1]$.
 
 Then system ($\textit{\ref{eqn:state_equation}}$) admits a unique solution $\mathbf x \in \mathcal C^1([0,T], \R^{2N}), \mathbf v\in \mathcal C^1([0,T], \R^{2N}).$
 \end{theorem}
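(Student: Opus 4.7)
The plan is to cast the system as a single first-order ODE in $\R^{4N}$ by setting $\mathbf z = (\mathbf x, \mathbf v)$, rewriting \eqref{eqn:state_equation} as $\dot{\mathbf z}(t) = F(t, \mathbf z(t))$, $\mathbf z(0) = \mathbf z_0$, with
\[
F(t, \mathbf z) = \bigl(\mathbf v,\ \tau(\mathbf w(t) - \mathbf v) - \tfrac{1}{N}\mathbf G(\mathbf x,\mathbf v)\bigr),
\]
where $\mathbf G$ collects the interaction sums. Then I would apply the classical Picard--Lindelöf theorem: I need to verify that $F$ is continuous in $t$ and locally Lipschitz in $\mathbf z$, uniformly on compact time intervals. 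Continuity in $t$ follows directly from $w_i \in \mathcal C([0,T],\R^2)$. The position-equation block is trivially linear in $\mathbf v$, and the velocity-equation block splits into the linear relaxation term (globally Lipschitz with constant $\tau$) and the interaction sum.

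For the interaction term, I would invoke Lemma \ref{lem:LipInteraction} componentwise: each summand $M(v_i,v_j)K(d,x_i,x_j,v_i,v_j)$ is Lipschitz in $(x_i,x_j,v_i,v_j)$ with constants $L_1, L_2$ independent of the indices. Summing over $j \neq i$ and over $i$, I get a global Lipschitz estimate for $\mathbf G$ on all of $\R^{4N}$, not merely local. Combined with the $\tau$-contribution, this gives a global Lipschitz constant for $F(t,\cdot)$, uniform in $t \in [0,T]$. Picard--Lindelöf then yields a unique local $\mathcal C^1$ solution.

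To extend the solution to all of $[0,T]$, I would derive an a priori bound. Since $K$ is globally bounded by Assumption \ref{as:well-posedness_1} and $M$ is a rotation (hence of operator norm $1$), the interaction term in \eqref{eqn:state_equation_b} is uniformly bounded, say by $C_K$. Combined with the linear damping and the boundedness of $w_i$ on $[0,T]$, Gronwall's inequality applied to $|\mathbf v(t)|$ yields a time-uniform bound $|\mathbf v(t)| \le C_v$ on $[0,T]$, and then integration of $\dot{\mathbf x} = \mathbf v$ gives a bound on $|\mathbf x(t)|$. This rules out finite-time blow-up, so the local solution extends to the full interval $[0,T]$, establishing existence and uniqueness in $\mathcal C^1([0,T],\R^{2N}) \times \mathcal C^1([0,T],\R^{2N})$.

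The only non-routine step is the Lipschitz analysis of $M(v_i,v_j)K(\cdots)$, and this has already been handled in Lemma \ref{lem:LipInteraction}; the remainder of the argument is a standard application of Picard--Lindelöf together with a Gronwall-type a priori estimate, so I expect no real obstacle beyond bookkeeping the constants across the sum over $j \neq i$.
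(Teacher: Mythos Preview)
Your proposal is correct and follows essentially the same route as the paper: invoke Lemma~\ref{lem:LipInteraction} for the Lipschitz property of the rotated interaction term and then apply Picard--Lindel\"of. The paper's proof is a single sentence to this effect; you have simply spelled out the standard details (recasting as a first-order system, handling the linear relaxation term, and adding the Gronwall a~priori bound to rule out blow-up), which is more than the paper itself provides. One small remark: Lemma~\ref{lem:LipInteraction} relies on Assumption~\ref{as:well-posedness_1}, where $K$ is only \emph{locally} Lipschitz, so the constants $L_1,L_2$ are in general local rather than global; your claim of a global Lipschitz constant for $\mathbf G$ is slightly overstated, but since you already include the a~priori bound step this does not affect the argument.
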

 
 \begin{proof}
 	On the basis of Assumption~\ref{lem:LipInteraction}  and  Lemma~\ref{lem:LipInteraction}, the result can be directly obtained with the help of Picard-Lindelöf theorem.
 
 \end{proof}

\subsubsection{Body size} \label{Numerical_schemes_state_system}
In the previous discussion the body size is an abstract parameter. To give more details, we consider a variation of the Morse potential \cite{Orsogna_Chuang}  leading to the interaction potential $$U(d, |x_i - x_j|) = R\cdot e^{\frac{d-\norm{x_i - x_j}}{r}} - A\cdot e^{\frac{d-\norm{x_i - x_j}}{a}}$$ leading to the forces $K$ given by
\begin{equation}
	\label{eqn:interaction_force}
	K(d, |x_i - x_j|) = \left(\frac{A}{a} \cdot e^{\frac{d-\norm{x_i - x_j}}{a}} - \frac{R}{r}\cdot e^{\frac{d-\norm{x_i - x_j}}{r}} \right) \cdot \frac{x_i - x_j}{\norm{x_i - x_j}}.
\end{equation}

\begin{remark}
Note that the forces depending on the body size as given in \eqref{eqn:interaction_force} satisfy the assumptions of Theorem~\ref{lem:uniq_sol_st_eq}. Hence, we have existence and uniqueness of solutions for the case with body size $d>0$ as well.
\end{remark}

\subsection{Numerical studies}
For the numerical studies of the model we draw random initial positions with uniform distribution in the domain and set initial velocities with respect of the desired direction of motion. We set the initial velocity to the desired velocity. Then we solve ($\ref{eqn:state_equation}$) with a variant of the leap frog scheme \cite{Claudia}. Indeed, the relaxation terms are solved implicitly and the interaction is solved explicitly as given by
\begin{equation}
	\label{eqn:leap_frog_scheme}
	\begin{split}
		&x_i^{k^{'}} = x_i^{k} + \frac{\Delta t}{2}v_i^{k}, \quad \qquad \qquad \qquad \qquad \qquad \qquad \quad \ \ \  v_i^{k^{'}} = (v_i^{k} + \Delta t \cdot w_i)/ (1 + \Delta t) \\
		&v_i^{k+1} = v_i^{k^{'}} + \Delta t \cdot  \frac{1}{N} \sum_{j\neq i}M(v_i^{k^{'}}, v_j^{k^{'}}) \cdot K(x_i^{k^{'}}, x_j^{k^{'}}), \quad x_i^{k+1} = x_i^{k^{'}} + \frac{\Delta t}{2}v_i^{k+1}, \ i=1,\dots,N,
	\end{split}
\end{equation}
where, $\Delta t$ denotes the step size of the time discretization. 

\subsubsection{Influence of the body size}
In the following we provide some numerical results showing the influence of the body size on  lane  formation in the corridor and crossing scenario, respectively. 
The first experiment simulates the movement of two oncoming streams of pedestrians along a spacious corridor.  The group of blue agents moves from left to right with desired velocity $w_\text{blue}=(0.7,0)^T$, wereas the red group of agents moves from right to left with desired velocity $w_\text{red}=(-0.7,0)^T$.  We consider $N_\text{blue}$ blue and $N_\text{red}$ red agents. Hence, the total number of pedestrians in the corridor is $N = N_\text{blue} + N_\text{red}$. The initial positions of the pedestrians $x(0) = \textbf{x}_0$ and their initial velocities $v(0) = \textbf{v}_0$ are illustrated in Figure \ref{fig:init_positions_corridor}.

To assure that the pedestrians do not leave the scenario, we add reflective and periodic boundary conditions. In the corridor case the black lines (top and bottom) in Figure \ref{fig:init_positions_corridor} show reflective boundaries.
We model the avoidance of wall contact, by reflecting the velocity vector of an agent that would step outside of the domain in the next time step. The behavior of reflection from the wall is the same as in \cite{Claudia}. The light blue lines illustrate periodic boundaries. Blue agents leaving the domain at the boundary on the right, enter again from the left. Analogously for the red agents. 

\begin{figure}[H]
	\begin{subfigure}{0.6\textwidth}
		\includegraphics[scale=0.67]{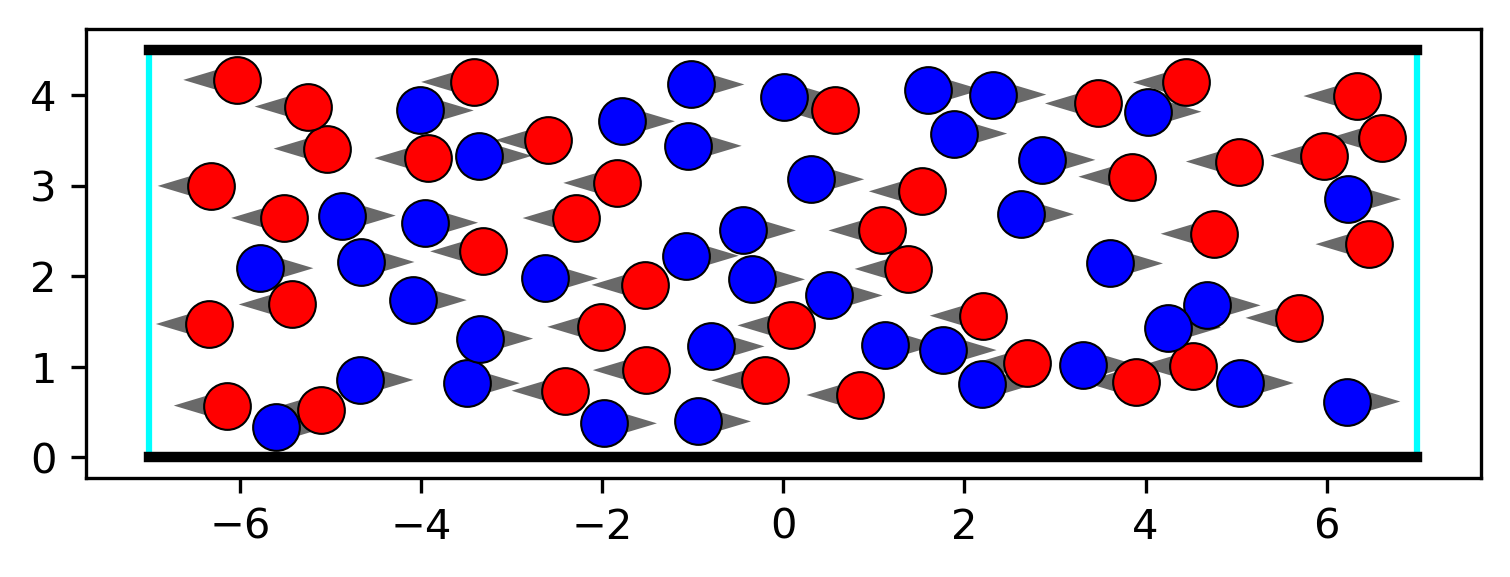} 
		\caption{Corridor}
		\label{fig:init_positions_corridor}
	\end{subfigure}
	\begin{subfigure}{0.4\textwidth}
		\includegraphics[scale=0.6]{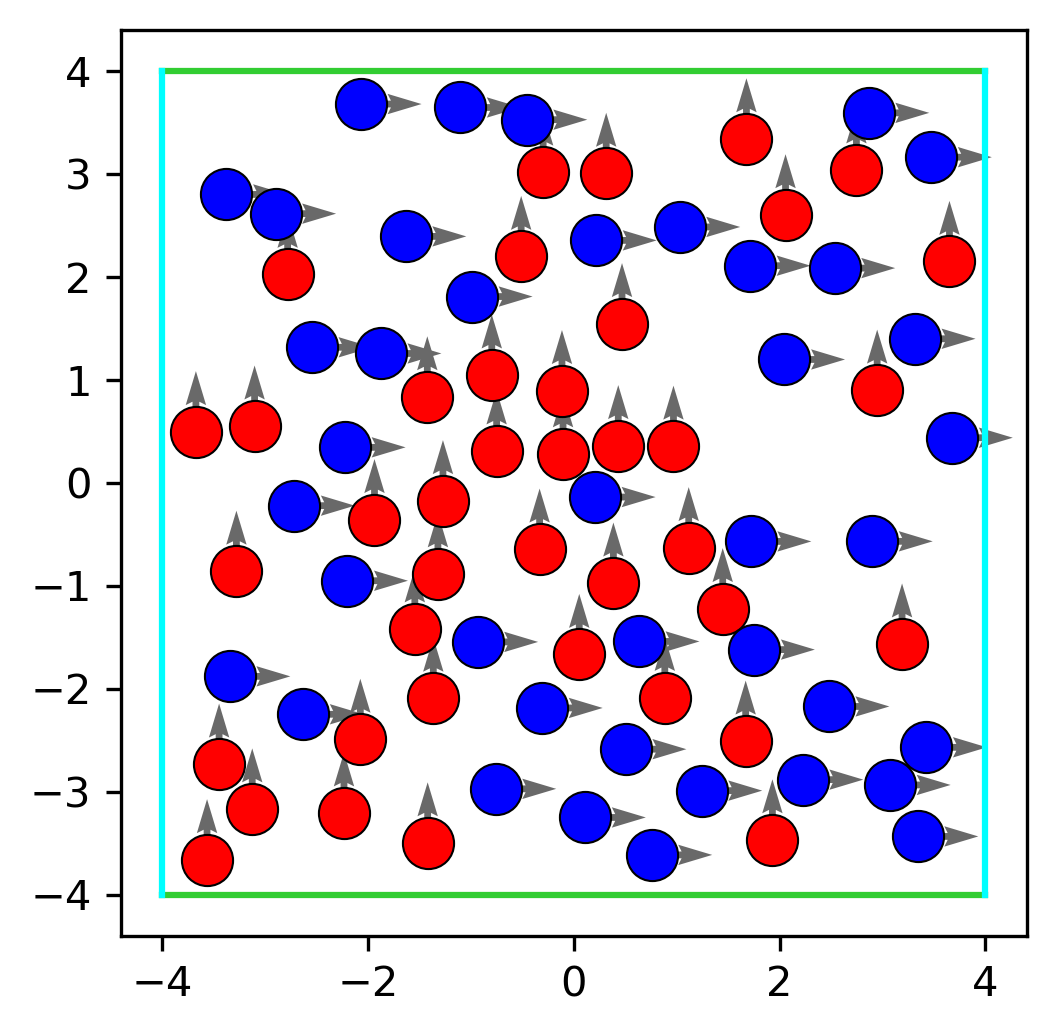}
		\caption{Crossing}
		\label{fig:init_positions_crossing}
	\end{subfigure}
	\caption{Initial positions and initial velocity vectors for the two scenarios with parameters $N=80,\  N_\text{blue} = 40, \ N_\text{red} = 40$,  $d = 0.2.$}
	\label{fig:initial_positions}
\end{figure}

In the second scenario we consider two groups of pedestrians at a crossing. Here, the blue group of pedestrians moves from left to right with desired velocity $w_\text{blue}=(0.7,0)^T$, and red group of pedestrians moves from bottom to top with desired velocity $w_\text{red}=(0,0.7)^T$. In total, there are 
$N = N_\text{blue} + N_\text{red}$ pedestrians. The initial positions and initial velocity vectors are presented in Figure \ref{fig:init_positions_crossing}. There, light blue and green lines represent periodic boundaries for blue and red agents respectively. The bottom and top green lines are inflow and outflow for red agents, left and right blue lines are inflow and  outflow for blue agents. Altogether, this results in Algorithm~\ref{alg:leap_frog} for the state system \eqref{eqn:state_equation}.

\medskip

\begin{algorithm}[H] \label{alg:leap_frog}
	\SetAlgoLined
	\KwIn{initial positions $\textbf{x}_0$ and velocities $\textbf{v}_0$ of pedestrians, model parameters}
	\While{$ t < T$}{
		update the positions and velocities of the agents using \eqref{eqn:leap_frog_scheme} \\
		check reflective boundary condition: if an agent is about to leave through a reflective border, its velocity is changed in the direction pointing inwards\\ 
		check periodic boundary condition: if an agent leaves through a periodic boundary, its position is adjusted due to the periodic boundary conditions\\
		$t = t + \Delta t$
	}
	\caption{ state system}
	\KwResult{trajectories and velocities at the time steps
	}
\end{algorithm}

\subsubsection{Study for different body sizes}

To analyse the simulation results for different body sizes, we fix values for the force parameters, and desired velocities of each pedestrian. The parameters are chosen to satisfy the stability ranges of the interaction force discussed in \cite{Orsogna_Chuang}. In fact, in the range $R/A > 1$ and $r/a < 1$ the interaction force $K$ is repulsive in a short-range  and attractive in a long-range. This allows the distance between pedestrians to be maintained.  Even if we include body size into the interaction force, it remains attractive in a short-range and repulsive in a long-range.  

\begin{figure}[ht!]
	\begin{subfigure}{0.5\textwidth}
		\includegraphics[scale=0.375]{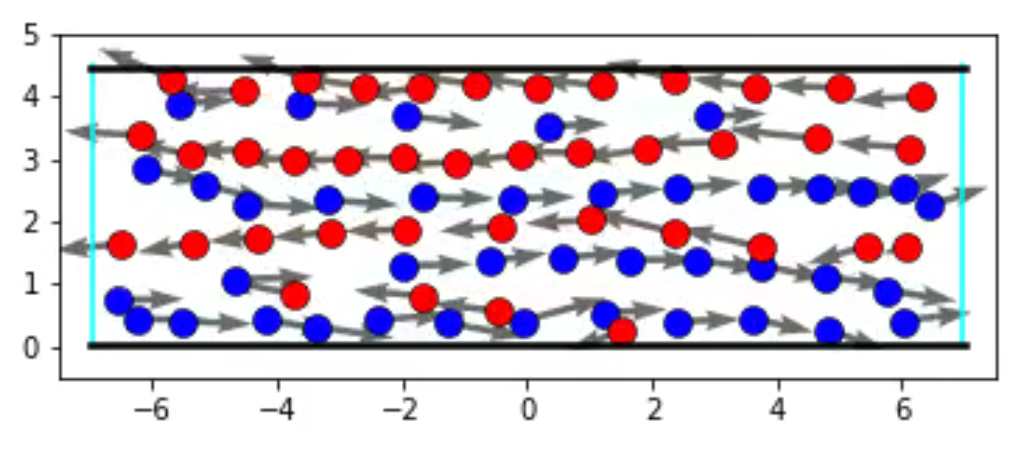} 
		\caption{Body diameter $d=0.4$}
		\label{fig:corr_br02}
	\end{subfigure}
	\begin{subfigure}{0.5\textwidth}
		\includegraphics[scale=0.51]{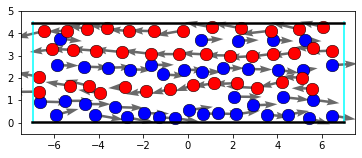} 
		\caption{Body diameter $d=0.46$}
		\label{fig:corr_br022}
	\end{subfigure}
	\begin{subfigure}{0.5\textwidth}
		\includegraphics[scale=0.37]{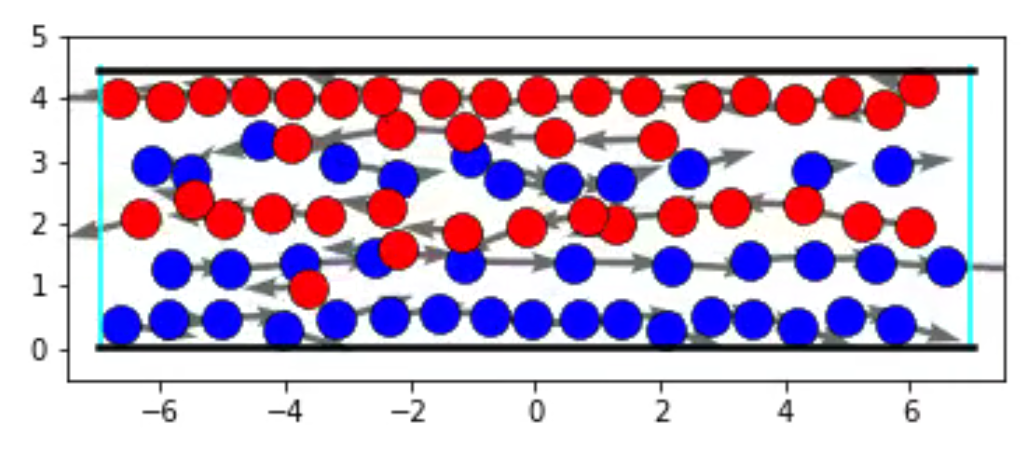}
		\caption{Body diameter $d=0.5$}
		\label{fig:corr_br_025}
	\end{subfigure}
	\begin{subfigure}{0.5\textwidth}
		\includegraphics[scale=0.5]{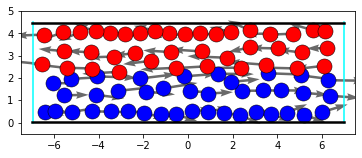}
		\caption{Body diameter $d=0.6$}
		\label{fig:corr_br_03}
	\end{subfigure}
	\caption{Simulation results in the corridor by different body size of pedestrians at time $T = 35$. On each simulation we fix parameters: $A=5, R=20, a=2, r=0.5, \lambda = 0.25$. Desired velocities for red and blue agents are  $w_\text{red}=(-0.7, 0)^T$ and $w_\text{blue}=(0.7, 0)^T$  respectively. Time step in the Leap-Frog Scheme is $\Delta t = 0.00625$.}
	\label{fig:lane_formation_by_body_size}
\end{figure}

Figure~\ref{fig:lane_formation_by_body_size} shows simulation results of the corridor scenario for different body sizes. The results indicate a relation between the body size and the number of lanes formed. The smaller the body size, the more lanes are obtained. The parameters used for the simulation are reported in the caption of the figure. Similar results are found for the crossing scenario in Figure~\ref{fig:lane_formation_cross_by_body_size}. Again, the smaller the body size, the more lanes are formed.

\begin{figure}[ht!]
	\begin{subfigure}{0.32\textwidth}
		\includegraphics[scale=0.5]{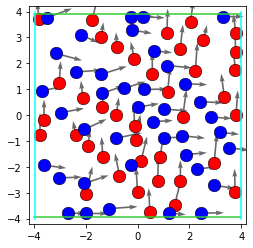} 
		\caption{Body diameter $d=0.4$}
		\label{fig:cros_br_02}
	\end{subfigure}
	\begin{subfigure}{0.32\textwidth}
		\includegraphics[scale=0.5]{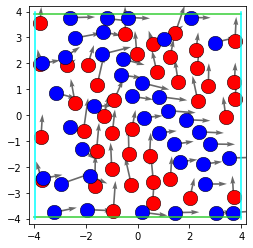} 
		\caption{Body diameter $d=0.46$}
		\label{fig:cros_br_023}
	\end{subfigure}
	\begin{subfigure}{0.32\textwidth}
		\includegraphics[scale=0.5]{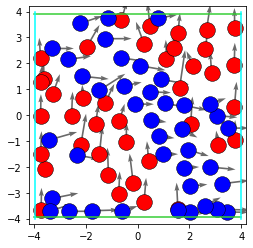}
		\caption{Body diameter $d=0.5$}
		\label{fig:cros_br_025}
	\end{subfigure}
	\caption{Simulation results at the crossing by different body size of pedestrians at time $T = 35$. On each simulation we fix parameters: $A=5, R=20, a=2, r=0.5, \lambda = 0.25$. Desired velocities for red and blue agents are  $\vec{w}_\text{red}=(0, 0.7)^T$ and $\vec{w}_\text{blue}=(0.7, 0)^T$  respectively.  Time step in the Leap-Frog Scheme is $\Delta t = 0.00625$.}
	\label{fig:lane_formation_cross_by_body_size}
\end{figure}

In all simulations we see the formation of so-called traffic lanes. This formation seems to be independent of the choice of the  random initial positions and velocities. It is interesting to note that even though every pedestrian is guided by simple rules for movement and interaction, phenomena arise that go beyond the behaviour of single pedestrians. Such phenomena of the self-organization are manifested in many multi-agent systems \cite{Dirk_Helbing}. They were reported in many articles concerning the movement of pedestrian flows \cite{Sieben_etal_2017,Zhang_etal_2019}, which speaks in favour of the proposed model. Moreover, we want to emphasize that not only the body size can influence the number of lanes. In fact, the choice of the width of the corridor, number of agents and attraction and repulsion force parameters can change the formation of lanes as well. For the force parameters this is shown exemplarily in Figure~\ref{fig:lane_formation_corr_by_diff_pars}. 

\begin{figure}[ht!]
	\begin{subfigure}{0.5\textwidth}
		\includegraphics[scale=0.4]{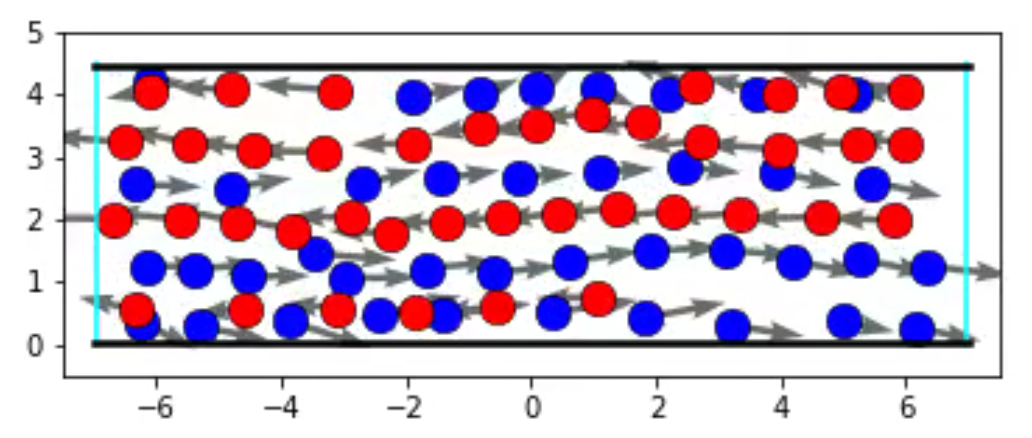} 
		\caption{ $A=3, R=15, a=2, r=0.5, \lambda = 0.25$}
		\label{fig:corr_br025_A3R15a2r05}
	\end{subfigure}
	\begin{subfigure}{0.5\textwidth}
		\includegraphics[scale=0.4]{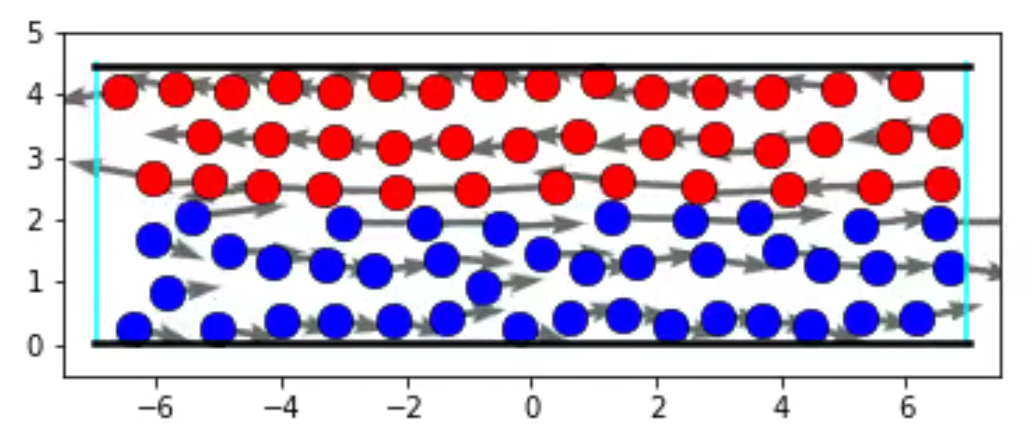} 
		\caption{ $A=5, R=25, a=1.2, r=0.3, \lambda = 0.25$}
		\label{fig:corr_br025_A5R25a12r03}
	\end{subfigure}
	
	\begin{subfigure}{0.5\textwidth}
		\includegraphics[scale=0.4]{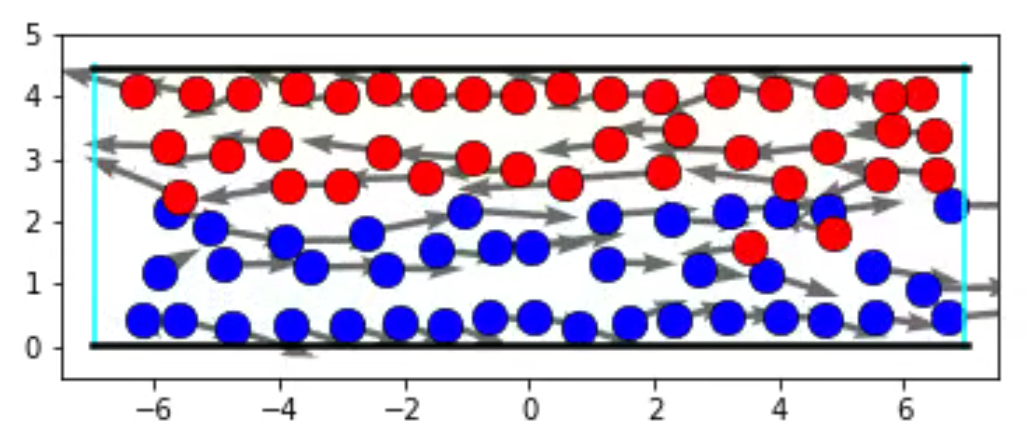} 
		\caption{ $A=4, R=26, a=2, r=0.5, \lambda = 0.25$}
		\label{fig:corr_br025_A4R26a2r05}
	\end{subfigure}
	\begin{subfigure}{0.5\textwidth}
		\includegraphics[scale=0.4]{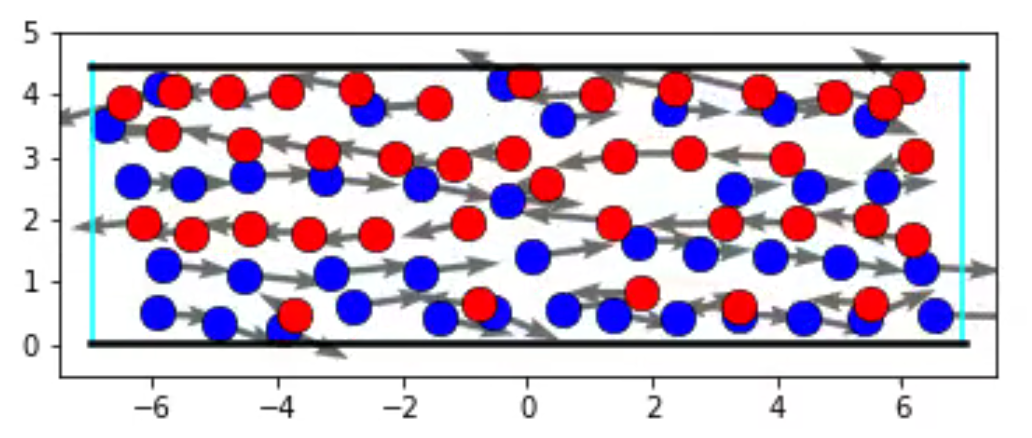} 
		\caption{ $A=5, R=25, a=2.4, r=0.6, \lambda = 0.25$}
		\label{fig:corr_br025_A5R20a24r06}
	\end{subfigure}
	
	\caption{Simulation results in the corridor by different force parameters at time $T = 35$. On each simulation body diameter of the agents are fixed: $d=0.5$. Time step in the Leap-Frog Scheme is $\Delta t = 0.00625$.}
	\label{fig:lane_formation_corr_by_diff_pars}
\end{figure}	

As the body size, ratio of repulsion and attraction amplitudes, and size of the corridor have similar effects in terms of volume exclusion, we suspect from these studies that the volume exclusion is the main driver of the lane formation process.

\subsubsection{Fundamental diagram}
Often fundamental diagrams are employed to analyse crowd motion models \cite{Schadschneider_etal_2009, Zhang_etal_2019}. Main objective is the relationship of speed and density \cite{ Seyfried_etal_2005, Steffen_Seyfried_2010, Zhang_etal_2019} which we study for the 
\begin{figure}[hbt!]
	\begin{subfigure}{0.48\textwidth}
		\includegraphics[	width=\textwidth,
		height =0.4\textwidth]{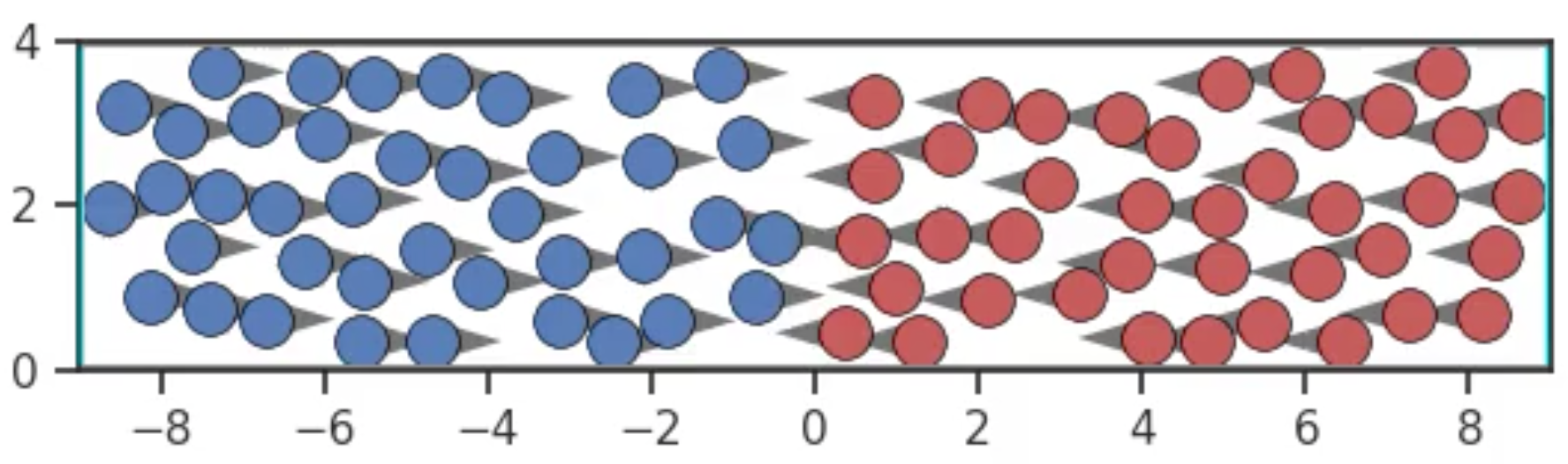}
	\end{subfigure}
	\begin{subfigure}{0.48\textwidth}
		\includegraphics[	width=\textwidth,
		height =0.4\textwidth]{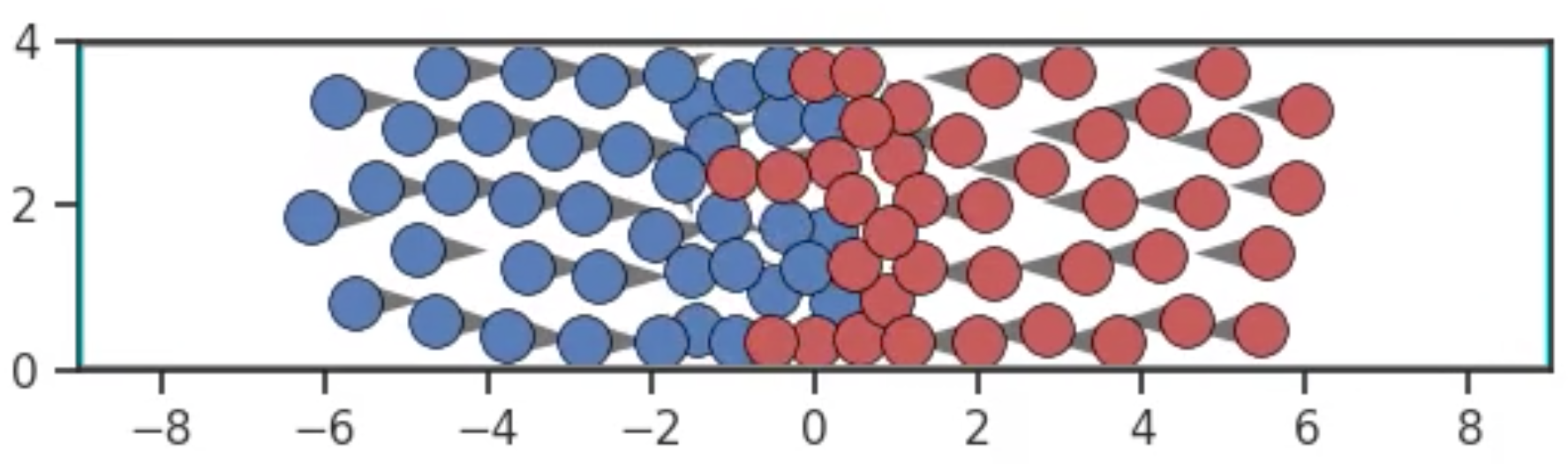}
	\end{subfigure}
	\caption{Bi-directional pedestrian flow at time $T=0$ and $T=5s$.}
	\label{fig:state_pos}
\end{figure} 
bi-directional flow simulated with the model described in Section \ref{sec:Microscopic_model} on the corridor with $17m$ length and $4m$ width, see Figure \ref{fig:state_pos} for an illustration. The density approximation is realized with the help of Voronoi diagrams as proposed in \cite{Cao_Seyfried_2017, Steffen_Seyfried_2010}.  Initially, agents move with their desired walking speed until they slow down (or speed up) due to interaction forces. Most interactions take place in the centre of the domain, it is therefore the focus of our interest. 

\begin{figure}[hbt!]
	\begin{subfigure}{0.48\textwidth}
		\includegraphics[width = \linewidth, 
		height =0.4\textwidth]{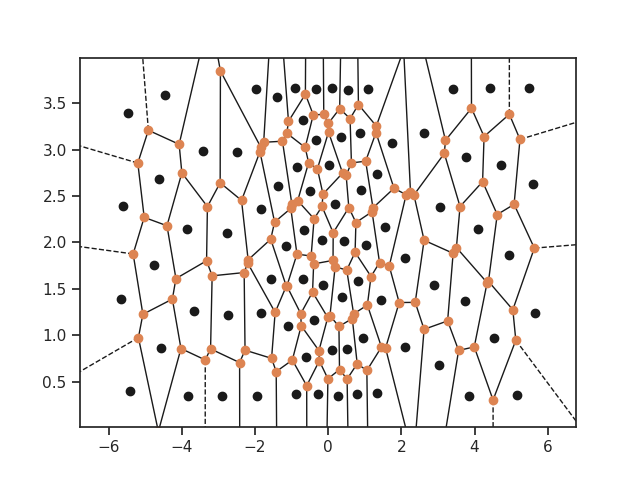} 
		\caption{\tiny Voronoi diagram}
		\label{fig:voronoi3_ts_5}
	\end{subfigure}
	\begin{subfigure}{0.48\textwidth}
		\includegraphics[width = \linewidth, 
		height =0.4\textwidth]{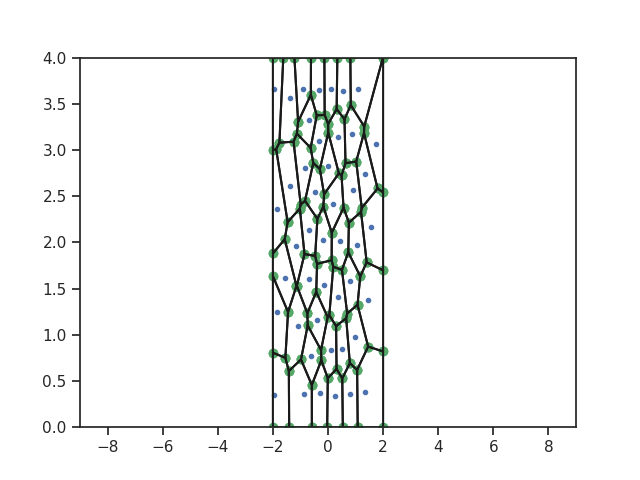}
		\caption{\tiny Bounded Voronoi diagram $\Omega = [-2, 2]\times[0, 4]$}
		\label{fig:bounded_voronoi3_ts_5}
	\end{subfigure}
	\caption{Voronoi diagrams of bi-directional pedestrian flow at $T=5s$.}
	\label{fig:voronoi}
\end{figure}

To approximate the density we construct Voronoi cells based on the positions of agents. The Voronoi diagram allows separating the area into computational grids (polygons) based on the triangulation of the computational area, in particular, the Delaunay triangulation \cite{Shewchuk_2002}. Every point on the Voronoi diagram has a region that is closer to it than any other \cite{Steffen_Seyfried_2010}. In Figure \ref{fig:voronoi3_ts_5} we see Voronoi cells of all agents. Figure \ref{fig:bounded_voronoi3_ts_5} shows the same cells, but only the ones in the region of interest $\Omega = [-2, 2]\times[0, 4].$ Outside of the domain $\Omega$ agents are involved in less interactions and walk approximately with their desired velocities. The model parameters in these simulation are set $\lambda=0.07$, $A = 0.2$, $R = 2$, $a=1$, $r=0.1$. 

\begin{figure}[H]
	\begin{subfigure}{0.48\textwidth}
		\includegraphics[	width=\textwidth,
		height =0.75\textwidth]{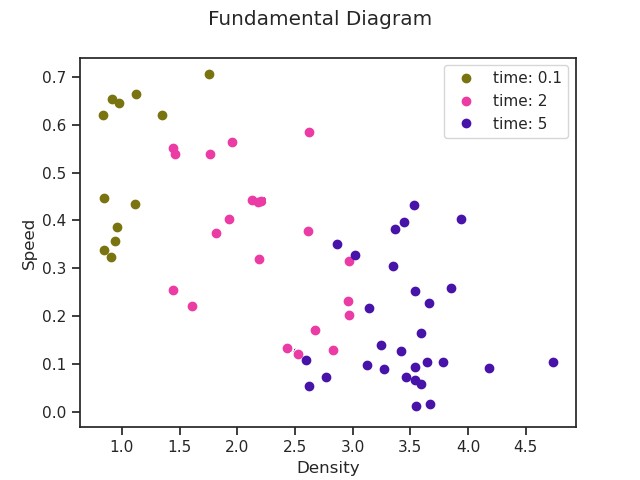}
		\caption{ $d=0.5$}
	\end{subfigure}
	\begin{subfigure}{0.48\textwidth}
		\includegraphics[	width=\textwidth,
		height =0.75\textwidth]{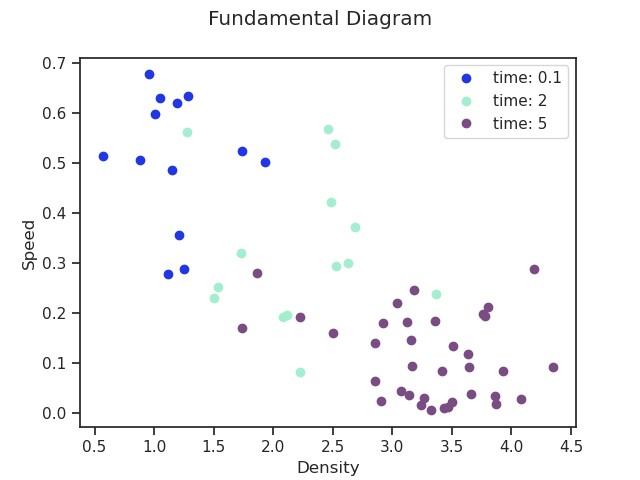}
		\caption{ $d=0.6$}
	\end{subfigure}
	\caption{Fundamental Diagrams of bi-directional pedestrian flow.  }
	\label{fig:density_vel}
\end{figure}

We employ the method proposed in \cite{Cao_Seyfried_2017, Steffen_Seyfried_2010} to compute the density as follows 
$$
\rho_{xy} (t)= \left\{\begin{split}
	1/A_{i}(t) \quad if (x,y) \in A_{i}(t), \\
	0 \quad \quad else \quad\quad\quad\quad\quad\\ 
\end{split}\right.
$$
where $A_i(t)$ gives the Voronoi cell area for agent $i$, and $\rho_{xy}$ is the density distribution of the space. Voronoi cells are computed using the Python, Scipy module, with the Voronoi and ConvexHull methods. Note that the velocities are given by the integration of \eqref{eqn:state_equation}.

In Figure \ref{fig:density_vel} we illustrate the relationship of density and speed at different time points. At the beginning, the agents move with the desired speed in regions with lower density leading to weak interaction forces. When the two groups meet, we see that as the density increases and at the same time the speed  decreases as expected. The body size seems to have minor influence on this effect.

\medskip

This ends our analysis and numerical study of the model. In the following sections we are concerned with its parameter calibration based on real data \cite{data_archive}. We begin with the statement of the calibration problem and analyse its well-posedness.

\section{Analysis of the parameter calibration problem}
\label{sec:par_calibration}
In this section, we state the parameter calibation problem and analyse the existence of minimizers. Then we lay the theoretical ground for the formulation of a gradient-descent method by deriving the first-order optimality conditions, the existence of adjoint states and finally the identification of the gradient of the reduced cost functional. This prepares the formulation of a gradient descent-based calibration algorithm that will be employed in the numerical section to fit the parameters to real data from the BaSiGo experiment carried out in Düsseldorf in 2013 \cite{Cao_Seyfried_2017, data_archive} in the next section.

Let us begin with the statement of the parameter calibration problem. Table~\ref{tab:model_parameters} shows all model parameters. For the calibration we focus on the scaling factor $\lambda$ involved in the collision avoidance process and the force strengths $A$ and $R.$ For fixed $1 \gg\epsilon > 0$, we define the set of admissible parameters 
\[
\Uad = \Big\{(\lambda, A, R) \in [-1+\epsilon, 1-\epsilon] \times [0,A_{\max}] \times [0,R_{\max}] \Big\} 
\] 
and we want to find ${u:= (\lambda, A, R)} \in \Uad$ such that the model trajectories fit the real trajectories best.
We thus consider the cost functional
\begin{equation*}
	J(y, u) := \int_{0}^{T} \frac{\sigma_1}{2N} \sum_{i=1}^{N}  \norm{x_{i}(t) - x_{i}^{\text{data}}(t)}^2_2 dt + \frac{\sigma_2}{2} \norm{u - u_\text{ref}}^2_2 \label{eqn:cost_functional}%\nonumber 
\end{equation*}
with $x^\text{data}$ given trajectory data from experiments. The first term measures the distance of the trajectories resulting from the model to the real trajectories from the data. The second term of the cost functional penalizes the distance of the parameters to some given reference parameters $u_\text{ref}.$ In case there are no reference values available, we set $\sigma_2 = 0.$

\begin{table}[H]	
	\begin{center}
		\caption{Model parameters}
		\label{tab:model_parameters}
		\begin{tabular}{|c|l|c|l|}   
			\hline
			$\lambda$ & scaling factor (collision angle) & 
			$\tau$ & relaxation parameter (desired velocity) \\
			$A$ & attractive force amplitude &
			$R$ & repulsive force amplitude \\
			$a$ & attractive force range &
			$r$ & repulsive force range \\
			$w$ & desired velocity  &
			$d$ & body size of pedestrian \\
			\hline
		\end{tabular}
	
	\end{center}
\end{table}

To study the well-posedness we use the following notion of optimality:
\begin{definition}
We call $u \in U_{ad}$ \emph{optimal}, if it is a solution to the optimization problem
\begin{equation}
\label{opt_problem} \tag{P}
\min_{(y,u) \in Y\times U_{ad}} J(y, u)   \quad \quad \text{subject to \eqref{eqn:state_equation}}. 
\end{equation}
\end{definition}
\noindent
Note that the calibration problem is constrained by the ODE system without boundary conditions. The boundary conditions are only incorporated in the numerical simulations to reflext the domain of the experiments appropriately.

\medskip

 In the following we consider the spaces $Y$ and $U$ given by
\[
	Y = [H^1([0,T] , \mathbb{R}^{DN}) \times H^1([0,T] , \mathbb{R}^{DN}) ], \quad U = [-1,1] \times \mathbb{R} \times \mathbb{R}.
\]
Note that both, $Y$ and $U$ are Hilbert spaces and $\Uad \subset U$ is closed. For notational convenience we define the state vector $y=(x, v) \in Y$ and the state operator 
\begin{equation}
	\label{eqn:State_oper}
	e: Y \times U \rightarrow Z^*, \qquad
	e(y,u) = \begin{pmatrix} 
		\frac{d}{dt} y-  F(y,u) \\
		y(0) - \textbf{y}_0 
	\end{pmatrix},
\end{equation}
where $F(y,u)$ is the vector containing the right-hand side of ($\ref{eqn:state_equation}a$) and ($\ref{eqn:state_equation}b$), respectively.

\subsection{Well-posedness of the parameter calibration problem}

The proof of the existence of an optimal parameter set for the calibration problem will be based on the following to Lemmata which are concerned with the boundedness of the states with respect to the control parameters and the weak continuity of the state operator $e$.

\begin{lemma}[Boundedness]  \label{lem:boundedness}
Let $w_i \in \mathcal C([0,T],\R^2)$ for all $i=1,\dots,N$ and Assumption~\ref{as:well-posedness_1} hold and suppose the interaction forces are given by \eqref{eqn:interaction_force}.
For given $u\in \Uad$ there exists $C>0$ depending only on the body size $d$ and $$ \bar w := \max\limits_{i} \sup\limits_{t\in[0,T]} |w_i(t)|,$$ such that the solution $y\in Y$ with $e(y,u) = 0$ satisfies
\[
\norm{y}_Y \le C(1+ |u|) .
\]  
\end{lemma}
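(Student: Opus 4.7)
The plan is to exploit the explicit Morse-type form \eqref{eqn:interaction_force} of the interaction force to obtain a bound on the nonlinearity that is linear in $|u|$, then to integrate the velocity equation to bound $\|v\|_{L^\infty}$, and finally to upgrade to $H^1$ bounds via the state equation itself. First, I would observe that the rotation matrix $M(v_i, v_j)$ is orthogonal for every admissible $\lambda$ (with $M = I$ in the degenerate cases $v_i = 0$ or $v_j = 0$), so $|M\xi| = |\xi|$ for all $\xi \in \mathbb{R}^2$. Next, since $d - \|x_i - x_j\| \le d$, the explicit formula \eqref{eqn:interaction_force} together with the triangle inequality yields
\[
\bigl| M(v_i, v_j)\, K(d, |x_i - x_j|) \bigr| \le \tfrac{A}{a} e^{d/a} + \tfrac{R}{r} e^{d/r} \le c(d)(A + R) \le 2 c(d)\, |u|,
\]
where $c(d)$ depends only on the body size $d$ (with $a, r$ regarded as fixed problem data) and where we used $A, R \ge 0$ together with $A \le |u|$, $R \le |u|$.

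Armed with this estimate, I would solve \eqref{eqn:state_equation_b} by variation of constants. Setting $g_i(t) = \tau w_i(t) - \tfrac{1}{N} \sum_{j \neq i} M(v_i, v_j)\, K(\cdots)$, one has $v_i(t) = e^{-\tau t} v_0^i + \int_0^t e^{-\tau(t-s)} g_i(s)\, ds$, and therefore
\[
|v_i(t)| \le |v_0^i| + \tfrac{1}{\tau} \bigl( \tau \bar w + 2 c(d)\, |u| \bigr) \le C_1 \bigl( 1 + |u| \bigr)
\]
uniformly in $t \in [0,T]$, with $C_1$ depending only on $d$ and $\bar w$ (along with the fixed data $\tau$, $\mathbf{v}_0$). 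Hence $\|v\|_{L^\infty([0,T])} \le C_1 (1 + |u|)$, and integrating \eqref{eqn:state_equation_a} gives $\|x\|_{L^\infty([0,T])} \le |\mathbf{x}_0| + T \|v\|_{L^\infty} \le C_2 (1 + |u|)$.

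To pass from $L^\infty$ to $H^1$, I would estimate the time derivatives pointwise using the state equation: $|\dot x_i| = |v_i| \le C_1 (1 + |u|)$ and $|\dot v_i| \le \tau(\bar w + \|v\|_{L^\infty}) + 2 c(d)\, |u| \le C_3 (1 + |u|)$. Since $[0, T]$ has finite measure, these $L^\infty$ bounds imply $L^2$ bounds of the same order, so that $\|y\|_Y \le C (1 + |u|)$ as required. The main technical care is in tracking constants so that $C$ really depends only on $d$ and $\bar w$ (with $T, \tau, a, r, N, \mathbf{x}_0, \mathbf{v}_0$ treated as fixed problem data); this is why the explicit Morse expression is essential, since the qualitative global boundedness furnished by Assumption~\ref{as:well-posedness_1} alone would conceal the linear dependence on $A$ and $R$ that drives the affine bound.
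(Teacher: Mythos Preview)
Your argument is correct and follows essentially the same overall strategy as the paper: extract from the explicit Morse form \eqref{eqn:interaction_force} a bound on the rotated force that is linear in $|u|$, use this to control $v$ (and then $x$) uniformly in time, and finally read off $H^1$ bounds on $y$ directly from the right-hand side of the state equation.

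The one genuine difference is in how you obtain the a priori bound on $v$. The paper squares and integrates, writing $|v(t)|^2 \le 2|v_0|^2 + 2\int_0^t \bigl(|w(s)-v(s)|^2 + C e^{d}|u|^2\bigr)\,ds$ and then applying Gronwall. You instead solve the linear part of \eqref{eqn:state_equation_b} explicitly via the Duhamel formula $v_i(t) = e^{-\tau t} v_0^i + \int_0^t e^{-\tau(t-s)} g_i(s)\,ds$, which exploits the damping term $-\tau v_i$ to get a uniform $L^\infty$ bound without any Gronwall step at all. Your route is slightly sharper (the constants do not grow exponentially in $T$) and makes the affine dependence on $|u|$ more transparent; the paper's Gronwall argument is the more generic ODE estimate and would still work if the relaxation term were replaced by something less structured. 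Either way the remaining steps---bounding $x$ by integrating $v$, and bounding $\dot y$ pointwise from the state equation---coincide.
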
	
\begin{proof}
We begin with the estimate for $\norm{y}_{L^2(0,T,\R^2)}.$ It holds
\begin{equation*}
	|x(t)|^2 \le  2|x_0|^2+ 2\int_0^t |v(s)|^2 ds, \qquad
	|v(t)|^2 \le 2|v_0|^2 + 2\int_0^t |w(s) - v(s)|^2 + Ce^d |u|^2 ds.
\end{equation*}	
Hence, using Gronwall Lemma we obtain $|y(t)|^2 \le |u|^2 e^{Ct}$ and integration over $[0,T]$ yields $\norm{y}_{L^2(0,T,\R^2)} \le C_1 |u|.$ For the time derivatives, we find
\[
|\frac{d}{dt} x(t)|^2 \le  |v(s)|^2, \qquad
|\frac{d}{dt} v(t)|^2 \le 2 |w(s) - v(s)|^2 + 2Ce^d |u|^2.
\]
Integration over $[0,T]$ leads to $\norm{\frac{d}{dt} y}_{L^2(0,T,\R^2)} \le  C(1 + |u|).$ The two estimates together give the result.
\end{proof}

\begin{lemma}\label{lem:weakCont}
The state operator 
\[e: Y \times U \rightarrow Z^*, \qquad e(y,u) = \begin{pmatrix} 
	\frac{d}{dt} y-  F(y,u) \\
	y(0) - \textbf{y}_0 
\end{pmatrix} 
\]
is weakly continuous.
\end{lemma}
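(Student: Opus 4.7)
The plan is to exploit the compact Sobolev embedding $H^1([0,T]) \hookrightarrow C([0,T])$ together with the Lipschitz estimate already established in Lemma~\ref{lem:LipInteraction}. Fix arbitrary sequences with $(y_n, u_n) \rightharpoonup (y, u)$ in $Y \times U$. Since $U$ is finite-dimensional, $u_n \to u$ strongly, and by Rellich-Kondrachov compactness $y_n \to y$ strongly in $C([0,T]; \R^{2DN})$. In particular, evaluation at $t=0$ is continuous in this topology, so $y_n(0) \to y(0)$; this immediately handles the second (initial-condition) component of $e(y_n, u_n) - e(y,u)$.

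For the first component, weak convergence $y_n \rightharpoonup y$ in $Y$ gives $\frac{d}{dt} y_n \rightharpoonup \frac{d}{dt} y$ in $L^2([0,T]; \R^{2DN})$ by the very definition of the $H^1$-inner product. Since the sum of a weakly convergent and a strongly convergent sequence in a Hilbert space converges weakly, it suffices to show the strong $L^2$-convergence $F(y_n, u_n) \to F(y, u)$. I would split
\begin{equation*}
F(y_n, u_n) - F(y, u) = \bigl[F(y_n, u_n) - F(y, u_n)\bigr] + \bigl[F(y, u_n) - F(y, u)\bigr].
\end{equation*}
The first bracket is controlled pointwise in $t$ by $C\,|y_n(t) - y(t)|$, with $C$ uniform over $\Uad$: the relaxation part is linear in $v$, and the interaction part is estimated by Lemma~\ref{lem:LipInteraction}, whose Lipschitz constants depend on the global bounds of $K$ and $\nabla M$ and can be chosen independent of $u_n \in \Uad$ since $\Uad$ is compact and $|\lambda| \le 1-\epsilon$. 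Combining with the uniform convergence from Step~1 yields convergence of this bracket to zero in $L^\infty \hookrightarrow L^2$. For the second bracket I use that $K$ in \eqref{eqn:interaction_force} is affine in $(A,R)$, and that $M$ depends continuously on $\lambda$ pointwise in $(v_i(t), v_j(t))$: where both velocities are nonzero the map $\lambda \mapsto \alpha_{ij}$ is smooth, and where one velocity vanishes $\alpha_{ij} = 0$ independently of $\lambda$. Hence $M(u_n)K(u_n) \to M(u)K(u)$ pointwise a.e.\ in $t$, dominated by a constant that is uniform over $\Uad$ via Assumption~\ref{as:well-posedness_1} and $|M|=1$; dominated convergence then upgrades to $L^2$-convergence.

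Putting the two components together gives $e(y_n, u_n) \rightharpoonup e(y, u)$ in $Z^*$, as required. The step that I expect to require the most care is the continuity in $u$ of the second bracket, because the rotation matrix $M$ is defined piecewise in the velocities and one cannot invoke a single Lipschitz-in-$u$ estimate. The natural remedy is to avoid any direct Lipschitz argument there and instead rely on pointwise continuity case-by-case, combined with the uniform boundedness of $M$ and $K$ on $\Uad$, so that dominated convergence takes over.
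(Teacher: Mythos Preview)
Your argument is correct and, in its handling of the nonlinearity, more thorough than the paper's. The paper tests $e(y_k,u_k)-e(\hat y,\hat u)$ against a function $\varphi$, separates off the linear pieces $\frac{d}{dt}y_k-\frac{d}{dt}\hat y$ and $y_k-\hat y$ (dispatched by weak convergence in $Y$), and then writes the interaction contribution as $M_{\lambda_k}(v)K_{A_k,R_k}(d,x)-M_{\hat\lambda}(v)K_{\hat A,\hat R}(d,x)$ with $v,x$ held fixed; the conclusion appeals only to continuity of $\lambda\mapsto M_\lambda$ and of $(A,R)\mapsto K_{A,R}$. The dependence of the interaction term on the varying state $y_k$ is therefore not addressed explicitly.

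Your route closes precisely that gap: the compact embedding $H^1([0,T])\hookrightarrow C([0,T])$ upgrades weak convergence of $y_n$ to uniform convergence, after which Lemma~\ref{lem:LipInteraction} controls the bracket $F(y_n,u_n)-F(y,u_n)$ pointwise in $t$ and hence in $L^2$. The remaining $u$-bracket you treat essentially as the paper does, via pointwise continuity in $\lambda$ and the affine dependence of $K$ on $(A,R)$, finished by dominated convergence. What your approach buys is a clean separation of state and control dependence and an argument that genuinely accounts for the nonlinear dependence of $M$ and $K$ on $y$; the paper's version is terser but leaves that point implicit. One minor remark: you phrase the uniformity of the Lipschitz constants ``over $\Uad$'', whereas the lemma is stated on $Y\times U$; since weak convergence in the finite-dimensional $U$ already forces strong convergence and hence boundedness of $(u_n)$, the same reasoning goes through after restricting to any compact set containing the sequence, so no real change is needed.
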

\begin{proof}
We need to show that $e(y_k,u_k) \rightharpoonup e(\hat{y},\hat{u})$ as $k\rightarrow \infty$, which can be reformulated as follows.

For any given test function $\varphi \in C_c^1(Z)$, we need to obtain the following convergence property
$$
\lim\limits_{k\rightarrow \infty} \int \langle e(y_k,u_k), \varphi \rangle dt \rightarrow \int \langle e(\hat{y},\hat{u}), \varphi \rangle dt.
$$

We estimate
\begin{align*}
&\lim\limits_{k\rightarrow \infty} \int_{0}^{t} \left[ \frac{d}{dt}y_k - \frac{d}{dt}\hat{y} - F(y_k,u_k) + F(\hat{y},\hat{u}) \right] \varphi dt  \\
&=: \lim\limits_{k\rightarrow \infty} \int_{0}^{t} \left[ \frac{d}{dt}y_k - \frac{d}{dt}\hat{y} + y_k - \hat y + M_{\lambda_k}(v) K_{A_k,R_k}(d,x) -  M_{\hat\lambda}(v) K_{\hat A,\hat R}(d,x)\right] \varphi dt \\
&=  \lim\limits_{k\rightarrow \infty} \int_{0}^{t} \left[ \frac{d}{dt}y_k - \frac{d}{dt}\hat{y} + y_k - \hat y \right] \varphi dt  \\
&\quad+ \lim\limits_{k\rightarrow \infty} \int_{0}^{t} \left[ \big(M_{\lambda_k}(v) - M_{\hat\lambda}(v)\big) K_{A_k,R_k}(d,x) + M_{\hat\lambda}(v) \big( K_{A_k,R_k}(d,x)- K_{\hat A,\hat R}(d,x) \big)\right] \varphi dt 
\end{align*}
Clearly, the first integral tends to zero for $k\to\infty$ by the weak convergence of $y \in Y.$ The first term in the second integral tends to zero by continuity of the map $\lambda \mapsto M_\lambda(v)$ and the second term of the second integral tends to zero by the continuity of the interaction force with respect to $A$ and $R$. Altogether, this yields the desired result.

\end{proof}

\begin{theorem}
There exists at least one solution $(y^*, u^*) \in Y \times  U_{ad}$ to \eqref{opt_problem}.
\end{theorem}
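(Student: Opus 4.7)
The plan is to apply the direct method of the calculus of variations. First I would observe that $J(y,u)\ge 0$ on $Y\times \Uad$, so the infimum $\inf J$ is finite and there exists a minimizing sequence $(y_k,u_k)\in Y\times\Uad$ with $e(y_k,u_k)=0$ and $J(y_k,u_k)\to\inf J$.

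Next I would extract a limit point for $(y_k,u_k)$. Since $\Uad$ is a closed and bounded subset of the finite-dimensional space $U=\R^3$, it is compact, so up to a subsequence $u_k\to u^*\in\Uad$ in norm. In particular $(|u_k|)_k$ is bounded, so Lemma~\ref{lem:boundedness} yields $\|y_k\|_Y\le C(1+|u_k|)\le C'$, and since $Y$ is a Hilbert space, a further subsequence satisfies $y_k\rightharpoonup y^*$ weakly in $Y$. The strong convergence of $u_k$ implies in particular $u_k\rightharpoonup u^*$, and together $(y_k,u_k)\rightharpoonup(y^*,u^*)$ in $Y\times U$.

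To see that the limit is admissible, I would invoke the weak continuity of the state operator (Lemma~\ref{lem:weakCont}): from $e(y_k,u_k)=0$ and $e(y_k,u_k)\rightharpoonup e(y^*,u^*)$ we conclude $e(y^*,u^*)=0$, so $y^*$ solves the state equation \eqref{eqn:state_equation} for the parameter $u^*$. Since $u^*\in\Uad$, the pair $(y^*,u^*)$ is feasible for \eqref{opt_problem}.

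Finally I would argue weak lower semicontinuity of $J$. The first term $\int_0^T \frac{\sigma_1}{2N}\sum_i\|x_i(t)-x_i^\text{data}(t)\|_2^2\,dt$ is convex and continuous as a function of $y\in Y$ (it is a squared Hilbert-space seminorm composed with a continuous affine map, noting $H^1\hookrightarrow L^2$), hence weakly lower semicontinuous; the second term $\frac{\sigma_2}{2}\|u-u_\text{ref}\|_2^2$ is convex and continuous on the finite-dimensional space $U$, hence continuous under $u_k\to u^*$. Combining,
\begin{equation*}
J(y^*,u^*)\le \liminf_{k\to\infty} J(y_k,u_k)=\inf_{(y,u)\in Y\times\Uad,\,e(y,u)=0} J(y,u),
\end{equation*}
so $(y^*,u^*)$ is a minimizer. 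The main obstacle in this proof is the passage to the limit in the nonlinear constraint, but this is already handled by Lemma~\ref{lem:weakCont}; the remaining compactness and semicontinuity arguments are then routine, thanks to the finite dimension of $U$ and the Hilbert structure of $Y$.
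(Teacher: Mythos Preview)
Your proof is correct and follows essentially the same direct-method argument as the paper: a minimizing sequence, boundedness of $(y_k)$ via Lemma~\ref{lem:boundedness}, weak limits, admissibility of the limit via the weak continuity of $e$ from Lemma~\ref{lem:weakCont}, and weak lower semicontinuity of $J$. Your observation that $\Uad\subset\R^3$ is compact (so $u_k\to u^*$ strongly rather than just weakly) is a mild sharpening over the paper's reflexivity argument, but the overall route is the same.
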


\begin{proof}
The cost functional $J$ is bounded from below and the state system is well-posed, so there exists
$$
m=\inf\limits_{(y,u) \in  Y \times U_{ad} } J(y,u).
$$

Let $(u_{k}) \in U_{ad}$ be a minimizing sequence. The sequence $(u_k) \subset \Uad$ is bounded and by reflexivity of $U$ it has a weakly convergent subsequence (not relabeled) with limit $\hat{u}$.
 By Lemma~\ref{lem:boundedness} we obtain the boundedness of $(y_k)$ and, again by reflexivity, the  existence of $\hat y$ such that
\begin{equation}
\label{eqn:weak_limit}
u_k \rightharpoonup \hat{u}  \text{ in } U_{ad} \quad \text{and} \quad y_k \rightharpoonup \hat{y}  \text{ in }  Y \text{ as }k\rightarrow \infty . 
\end{equation}
The weak continuity of $e(y,u)$ shown in Lemma~\ref{lem:weakCont} implies
$$
\norm{e(\hat{y}, \hat{u})  } \leq \liminf\limits_{k\rightarrow \infty} \norm{e(y_k,u_k) } = 0.
$$
Hence $\hat y$ is the solution to the state equation with parameters $\hat u.$
By the weak lower semicontinuity of the norm, we obtain
$$
J(\hat{y}, \hat{u}) \leq \liminf\limits_{k\rightarrow \infty} J(y_k,u_k) = m,
$$
which allows us to conclude that $(\hat{y}, \hat{u})$ is a minimizer of \eqref{opt_problem}.
\end{proof}

\begin{remark}
Because of the nonlinearity of the state equation we cannot expect uniqueness of the optimal control.
\end{remark}

Having the existence of an optimal solution, we proceed with the derivation of the first-order necessary conditions, which will form the basis of the identification algorithm.

\subsection{First-order necessary conditions}
\label{FONC}

We introduce the dual pairing
\begin{flalign}
\left\langle e(y, u), (\xi, \eta)\right\rangle_{Z, Z^*}   
=\int_{0}^{T} \left(\frac{d}{dt}  y -F(y,u) \right) \cdot \xi(t) dt+ (y(0)-\textbf{y}_0) \cdot \eta,
\end{flalign}
where $\xi, \eta$ are the Lagrange multipliers in the Banach space $$Z = [L^2([0,T] , \mathbb{R}^{DN}) \times L^2([0,T] , \mathbb{R}^{DN}) \times \mathbb R^{ND} \times \mathbb R^{ND} ]$$ that represent the space of the adjoint states. Here, $\xi = (\xi_x, \xi_v),$ $\eta = (\eta_x, \eta_v)$ and $(\xi, \eta) \in Z$. %We denote the dual of $Z$ as $Z^*$. 
The space of states $Y$ and the set of controls $U$ are defined at the beginning of in Section~\ref{sec:par_calibration}.

We formally derive the first-order optimality system with the help of the Lagrangian corresponding to the constrained parameter calibration problem given by
\begin{equation}
\label{eqn:Lagrangian}
\mathcal{L}: Y\times U \times Z \rightarrow \mathbb{R}, \qquad 
\mathcal{L}(y, u, \xi, \eta) = J(y, u) + \left\langle e(y, u), (\xi, \eta)\right\rangle_{Z, Z^*}. 
\end{equation}

\subsubsection{Derivation of the adjoint system and the optimality condition}
\label{sec:Adjoint_MP}
Solving $d \mathcal{L}(y, u, \xi, \eta) = 0$ yields the first-order optimality condition \cite{Rene_Pinnau}. We begin with the computation of the directional derivatives of \eqref{eqn:Lagrangian} with respect to the state $y.$ For notational convenience we consider $x$ and $v$ separately. 

First, we obtain Gâteaux derivatives of the objective function
\begin{equation*}
d_{x_i} J(y, u)[h_{x_i}] = \frac{\sigma_1}{N}  \int_{0}^{T}  (x_{i}(t) -x_{i}^{data}(t) )h_{x,i} (t) dt,\qquad
d_{v_i} J(y, u)[h_{v_i}] = 0.
\end{equation*}

Now we derive the directional derivatives w.r.t.~the positions of agents for the second part of Lagrange functional
\begin{flalign*}
%\label{eqn:E_total_x}
&d_{x_i}\left\langle e(y,u), (\xi, \eta)\right\rangle[h_{x_i}]  =d_{x_i}\left\langle e_i(y,u), (\xi, \eta)_i\right\rangle[h_{x_i}] + \sum_{\substack{j=1\\j\neq i}}^{N}d_{x_i}\left\langle e_j(y,u), (\xi, \eta)_j\right\rangle[h_{x_i}].&
\end{flalign*}

 Using integration by parts, we obtain 
\begingroup
\allowdisplaybreaks
\begin{flalign}
\label{eqn:E_total_x1}
d_{x_i} \left\langle e(y,u), (\xi, \eta)\right\rangle & [h_{x_i}]  = \int_{0}^{T}  h_{x_i}^{'}\xi_{1,i}(t) + \frac{1}{N} h_{x_i} \sum_{\substack{j=1\\j\neq i}}^{N}\biggl(M(v_i,v_j)d_{x_i}K(x_i,x_j) \biggr)^T\xi_{2,i}(t) dt    \\
&\qquad\qquad+ \int_{0}^{T} h_{x_i}\frac{1}{N} \sum_{\substack{j=1\\j\neq i}}^{N} \left[M(v_j,v_i) d_{x_i}K(x_j,x_i) \right]^T\xi_{2,j}(t)dt +  h_{x_i}(0)\eta_{1,i}.& \nonumber
\end{flalign}
Since $M(v_i,v_j) = M(v_j,v_i)$ is symmetric and $d_{x_i}K(x_i,x_j) = -d_{x_i}K(x_j,x_i)$ by the radial symmetry of $K$, we obtain
\begin{flalign*}
d_{x_i} \langle &e(y, u), (\xi, \eta)\rangle[h_{x_i}]  = h_{x,i}(0)\eta_{1,i} \\
\qquad  &+\int_{0}^{T} \biggl[ h_{x,i}^{'}(t)\xi_{1,i}(t) + h_{x,i}(t)  \frac{1}{N}  \sum_{\substack{j=1\\j\neq i}}^{N}\biggl(M(v_i,v_j)d_{x_i}K(x_i,x_j) \biggr)^T (\xi_{2,i}(t) - \xi_{2,j}(t))    \biggr] dt .
\end{flalign*}
\endgroup

Similarly, we obtain the derivative in direction $[h_{v_i}].$ Indeed, by using integration by parts we find
\begingroup
\begin{equation}
\label{eqn:E_total_v}
d_{v_i}\left\langle e(y, u), (\xi, \eta)\right\rangle  [h_{v_i}]  =d_{v_i}\left\langle e_i(y, u), (\xi, \eta)_i\right\rangle[h_{v_i}] + \sum_{\substack{j=1\\j\neq i}}^{N}d_{v_i}\left\langle e_j(y, u), (\xi, \eta)_j\right\rangle[h_{v_i}].
\end{equation}
To simplify the notation we introduce the operator $d_{v_i}M^*(v_i,v_j)$ resulting from matrix reformulations, see Appendix~\ref{app:AppendixA} for more details. We get
\begin{align*}
d_{v_i}\langle &e(y, u), (\xi, \eta)\rangle  [h_{v_i}]  = h_{v_i}(0)\eta_{2,i} + \int_{0}^{T}  h_{v_i}^{'}\xi_{2,i}(t) -  h_{v_i}\xi_{1,i}(t) 
+ \tau h_{v_i}\xi_{2,i}(t)\; dt \\
&+  \int_{0}^{T} h_{v_i} \biggl[ \frac{1}{N} \sum_{\substack{j=1\\j\neq i}}^{N}d_{v_i} M^*(v_i,v_j) K(x_i,x_j) \xi_{2,i}(t) 
+ \frac{1}{N}\sum_{\substack{j=1\\j\neq i}}^{N} d_{v_i}M^*(v_j,v_i) K(x_j,x_i)\xi_{2,j}(t) \biggr] dt.
\end{align*}

Moreover, the directional derivatives w.r.t.~the control are given by
\begin{flalign}
	&d_u J(y, u)[h_u] = \sigma_2 (u -u_{ref})h_{u}, &
\end{flalign}
\begin{flalign}
	&d_{u}\left\langle e(y, u), (\xi, \eta)\right\rangle[h_{u}]   = -\int_{0}^{T} h_{u} \sum_{i} \left[d_{u} F(y_i,u)\right]^T   \xi_{2,i}(t)dt.&
\end{flalign}

\begin{remark}
	For the specific choice \eqref{eqn:interaction_force}, as we choose the interaction forces for the numerical results, the derivatives read
	\begin{flalign*}
		&\frac{dF(y_i,u) }{d\lambda} = \begin{cases}
			-\frac{1}{N}  \sum \limits_{j \neq i} \begin{pmatrix} 
				-\sin\alpha_{ij} & -\cos\alpha_{ij} \\
				\cos\alpha_{ij}  & -\sin\alpha_{ij} 
			\end{pmatrix} \arccos \frac{v_i \cdot v_j}{\norm{v_i} \norm{v_j}} \cdot K(x_i, x_j), & \text{for } v_i, v_j \neq 0, \\ \\
			\vec{0}_{2\times 1},                                                                      & \text{else}
		\end{cases},&\\
		&\frac{dF(y_i,u) }{dA} = -\frac{1}{a\cdot N} \sum_{j\neq i} M(v_i,v_j) \cdot e^{\frac{d-\norm{x_i-x_j}}{a}} \cdot \frac{x_i - x_j}{\norm{x_i-x_j}},&\\
		&\frac{dF(y_i,u) }{dR} = \frac{1}{r\cdot N} \sum_{j\neq i} M(v_i,v_j) \cdot e^{\frac{d-\norm{x_i-x_j}}{r}} \cdot \frac{x_i - x_j}{\norm{x_i-x_j}}.&
	\end{flalign*}
\end{remark}

\endgroup

\subsubsection{Existence of  adjoint states}
\label{ExistenceAdjoints}

The proof of the existence of adjoint states is based on Corollary 1.3 in \cite{Rene_Pinnau}, which we give in Appendix~\ref{app:existenceAdjoint} for completeness.

\begin{theorem}
	Let $w_i \in \mathcal C([0,T], \R^2), i=1,\dots,N$ be given, the Assumption~\ref{as:well-posedness_1} and \ref{as:well-posedness_2} hold and $u^* \in \Uad$ be an optimal solution of Problem (\ref{opt_problem}) and let $y^* \in Y$ such that $e(y^*,u^*)=0$. Then there exist an adjoint state  $p^*=(\xi^{*}, \eta^*) \in Z^*$ such that the following optimality conditions hold
	\begin{align*}
	&\langle e(y^*,u^*), p \rangle_{Z,Z^*} = 0 &&\forall p \in Z^*, \\
	&\langle L_y(y^*, u^*,p^*) , h \rangle_{Y^*,Y} = 0 &&\forall h \in Y^*, \\
	&u^* \in \Uad, \langle L_u(y^*,  u^*, p^*) , u-u^* \rangle_{U^*,U} \ge 0, &&\forall u\in \Uad.
\end{align*}
\end{theorem}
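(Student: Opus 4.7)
The plan is to invoke Corollary 1.3 of \cite{Rene_Pinnau} (reproduced in Appendix~\ref{app:existenceAdjoint}), which reduces the existence of an adjoint state satisfying the three KKT-type conditions to checking (i) continuous Fr\'echet differentiability of the cost $J$ and of the constraint operator $e$ on a neighborhood of $(y^*, u^*)$, and (ii) surjectivity of the partial derivative $e_y(y^*, u^*): Y \to Z^*$. The first two optimality conditions in the statement are then just the feasibility of $(y^*, u^*)$ and the $y$-component of the stationarity of the Lagrangian, while the variational inequality in $u$ encodes the constraint $u \in \Uad$ together with the $u$-component of stationarity.

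For the differentiability part, I would observe that $J$ is quadratic in $y$ and in $u$, hence trivially $C^1$ on $Y \times U$. For the state operator $e$ defined in \eqref{eqn:State_oper}, differentiability reduces to differentiability of $F(y,u)$ with respect to $(y,u)$. The linear drift $\tau(w - v)$ and the kinematic coupling $v$ pose no issue. The nonlinear part is $M(v_i, v_j) K(d, x_i, x_j, v_i, v_j)$. By Assumption~\ref{as:well-posedness_2} the gradients of $K$ are locally Lipschitz and globally bounded, and the explicit formula \eqref{eqn:rotation_matrix} together with the computation in the proof of Lemma~\ref{lem:LipInteraction} (see also Appendix~\ref{app:AppendixB}) shows that $\nabla M$ exists and is bounded wherever $v_i, v_j \neq 0$. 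Differentiability with respect to the parameters $u = (\lambda, A, R)$ in the specific case \eqref{eqn:interaction_force} is explicit, as recorded in the remark following \eqref{eqn:E_total_v}. This yields that $e \in C^1(Y \times U, Z^*)$.

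The main obstacle is the surjectivity of $e_y(y^*, u^*)$. Unfolding the definition, given any $(r, s) \in Z^*$ (with $r$ a forcing in the ODE component and $s$ an initial condition), I must produce $h = (h_x, h_v) \in Y$ with
\begin{equation*}
\frac{d}{dt} h - F_y(y^*, u^*)\, h = r, \qquad h(0) = s.
\end{equation*}
This is a \emph{linear} inhomogeneous ODE system on $[0,T]$ with prescribed initial datum. The coefficient matrix is $F_y(y^*, u^*)$, and by the same arguments used above together with the boundedness assertions in Assumption~\ref{as:well-posedness_2}, its entries are bounded on $[0,T]$ (indeed continuous in $t$ since $y^* \in Y \hookrightarrow C([0,T])$). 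A standard Picard--Lindel\"of/variation-of-constants argument then produces a unique $h \in C^1([0,T], \R^{2DN}) \hookrightarrow Y$ solving the equation, and the usual Gronwall bound gives a continuous dependence estimate $\|h\|_Y \le C(\|r\|_{L^2} + |s|)$. This simultaneously establishes surjectivity and boundedness of the right-inverse of $e_y(y^*, u^*)$.

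With the hypotheses of Corollary~1.3 verified, the corollary directly supplies the adjoint $p^* = (\xi^*, \eta^*) \in Z^{**} \cong Z$ (since $Z$ is a Hilbert space we identify $Z^*$ with $Z$) and the three variational equalities/inequality in the statement. The first is the state equation evaluated at the optimum; the second, written out using the computations of Section~\ref{sec:Adjoint_MP}, yields the adjoint system for $\xi^*$ with terminal/initial conditions encoded in $\eta^*$; the third is the projected gradient condition for $u^*$ on the closed convex set $\Uad$.
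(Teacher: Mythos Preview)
Your proposal is correct and follows essentially the same route as the paper: both invoke Corollary~1.3 of \cite{Rene_Pinnau} and verify its hypotheses by checking differentiability of $J$ and $e$ and then solving the linearized state equation to obtain a bounded inverse of $e_y$. The only cosmetic differences are that the paper explicitly lists and checks all four items (A1)--(A4) of Assumption~\ref{ass:cor} (you fold (A1) and (A3) in implicitly), and the paper phrases (A4) as ``bounded inverse'' and appeals to Carath\'eodory's theorem for the linear ODE, whereas you speak of ``surjectivity'' and use Picard--Lindel\"of plus Gronwall---but since you also derive the continuous-dependence estimate, you in fact establish the bounded inverse just as the paper does.
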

\begin{proof}
We check requirements given in Assumpion~\ref{ass:cor}:
\begin{itemize}
	\item [(A1)] $ \Uad =  [-1+\epsilon, 1-\epsilon] \times [0,A_{\max}] \times [0,R_{\max}]  \subset U$ is nonempty, convex
and closed.
	
	\item [(A2)] We first note that $U,Y,Z$ are Banach spaces. Further, $J$ is of tracking type and therefore Fr\'echet differentiable \cite{Fredi_Troltzsch}. We are left to show Fr\'echet differentiability of the state operator $e(y,u): Y \times U \rightarrow Z.$

	In Section~\ref{sec:Adjoint_MP} we computed the first variations of $e(y,u)$ with w.r.t.~$y$ and $u$ by
	$$
	d_{y} \left \langle e(y, u)[(h_y)], (\xi, \eta)\right\rangle = \lim\limits_{\epsilon \rightarrow 0} \frac{1}{\epsilon}\left \langle e(y+\epsilon h_y, u) - e(y,u), (\xi, \eta)\right\rangle,
	$$
	and obtained \eqref{eqn:E_total_x1} and \eqref{eqn:E_total_v}. There, continuity of linear terms follows directly from the definition. We show the continuity for nonlinear terms: 
	
	Let, the sequence $y_k:=(x_k, v_k) \subset Y$ have the limit $y_k$ such that $x_k \rightarrow x \text{ and } v_k \rightarrow v $ as $k \rightarrow \infty$. Using Assumption~\ref{as:well-posedness_2} we show continuity of nonlinear terms in \eqref{eqn:E_total_x1}. Indeed, for the $i$-th component, it holds
	\begin{flalign*}
	&\int_{0}^{T}    \frac{1}{N} h_{x_i} \sum_{\substack{j=1\\j\neq i}}^{N}\biggl(M(v_{k_i},v_{k_j})d_{x_i}K(x_{k_i},x_{k_j}) - M(v_i,v_j)d_{x_i}K(x_i,x_j) \biggr)^T\ \left(\xi_{2,i}- \xi_{2,j}\right)dt \\
	&\leq L_1\cdot L_2 \int_{0}^{T} \frac{1}{N}\sum_{\substack{j=1\\j\neq i}}^{N}\biggl( \norm{v_{k_i} - v_i} + \norm{v_{k_j} - v_j} + \norm{x_{k_i} - x_i} + \norm{x_{k_j} - x_j} \biggr)\norm{ \xi_{2,i} - \xi_{2,j}}dt \\
	&\leq L_1 \cdot L_2 \int_{0}^{T} \norm{y_{k_i} - y_i} \norm{ \xi_{2,i} - \xi_{2,j}}dt,
	\end{flalign*}
	where $L_1, L_2$ are Lipschitz constants. Analogously, we show the continuity for the nonlinear terms of (\ref{eqn:E_total_v})  using Assumption \ref{as:well-posedness_1} and Appendix \ref{app:AppendixB}. These, yields to the continuity of the state operator $e(y,u)$ w.r.t.~$y$. The continuity w.r.t.~$u$, can be concluded from the continuity of the $\lambda \mapsto M,$ and the linearity of the force term w.r.t~$A$ and $R.$ Altogether, this proves the continuity of $e$ as desired.
	
	\item [(A3)] By Theorem \ref{lem:uniq_sol_st_eq}  the state system $e(y,u) = 0$ has an unique solution $y=y(u) \in Y$ for all $u \in V \subset U$ a neighbourhood of $\Uad.$

	\item [(A4)] We have to show that $e_y(y(u), u) \in \mathcal{L}(Y,Z)$ has a bounded inverse for all $u \in V \supset \Uad$.
We can write $d_{y}e(y, u)[h]$ in general form
$$d_{y}e(y, u)[h] = \frac{d}{dt}h(t) +  d_yF(y, u)h(t),$$
where $d_yF(y, u)$ is integrable in $t$ over every finite interval $I \subset [0,T]$ thanks to Assumption~\ref{as:well-posedness_2}.

We consider $d_{y}e(y, u)[h] = r$ for arbitrary $r\in Z^*.$ By Carathéodory's existence theorem, we get for every $r \in Z^*$ a unique solution \cite{Fornasier_2014}, namely $h=d_y e(y,u)^{-1}[r]$. Using $d_{y}e(y(u), u)$, we obtain
$$
\norm{h(t)} \leq \norm{h(0)} + \int_{0}^{T} \norm{r(s)}ds +C\exp{\left(\int_{0}^{T}\norm{h(s)}ds\right)}, \quad t\in[0,T]
$$
and with the help of Gronwall's Lemma we get the boundedness of the inverse of $d_{y}e(y(u), u)$.
\end{itemize}
Altogether, the requirements of Assumption~\ref{ass:cor} are satisfied, and thus the Proposition~\ref{prop:adjoint} (see Appendix~\ref{app:existenceAdjoint}) yields the existence of an adjoint state.
\end{proof}

\begin{remark}
Note that assuming more regularity of the adjoint states, for instance $Z=Y$, we may establish the strong formulation of the adjoint system given by
\begin{flalign}
	\label{eqn:Adjoint_system}
	\xi^{'}_{1,i}(t) = & \frac{\sigma_1}{N} (x_{i}(t) - x_{i}^{data}(t)) +  \frac{1}{N} \sum_{\substack{j=1\\j\neq i}}^{N}\biggl(M(v_i,v_j)d_{x_i}K(x_i,x_j)\biggr)^T(\xi_{2,i}(t) - \xi_{2,j}(t)) ,  \nonumber \\
	\xi^{'}_{2,i}(t) = &-\xi_{1,i}(t) + \tau \xi_{2,i}(t)   - \frac{1}{N} \sum_{\substack{j=1\\j\neq i}}^{N} d_{v_i} M^*(v_i,v_j)K(x_i,x_j) \xi_{2,i}(t) \nonumber \\
	&- \frac{1}{N} \sum_{\substack{j=1\\j\neq i}}^{N} d_{v_i} M^*(v_j(t),v_i(t))K(x_j(t),x_i(t))  \xi_{2,j}(t)&
\end{flalign}
supplemented with the terminal conditions $ \xi_{1,i}(T) =0 , \  \xi_{2,i}(T) =0.$ The strong form will be employed for the numerical results in Section~\ref{sec:calibration_results}.
\end{remark}

\subsubsection{Gradient of the reduced cost functional}
\label{GradientReducedCostFunctional}

To minimize the objective function we aim to apply gradient descent algorithm. In order to determine the  gradient we define the control-to-state operator $ \mathcal{F}: U \rightarrow Y $, and  introduce the reduced cost functional as ${\hat{J}(u):= J(\mathcal{F}(u), u)}$.  Using $e(y,u) = 0$, we obtain
$$
0=d_{y}e(\mathcal{F}(u), u)[d\mathcal{F}(u)] + d_{u} e(\mathcal{F}(u),u)). 
$$

Taking the derivative of the Lagrangian with respect to the state $y$, we get
$$
d_{y}e(y,u)^{*}\xi = -d_{y}J(y,u).
$$

With these, we can compute the G\^{a}teaux derivative of the reduced cost functional in the direction $h_{u} \in U$, and obtain
\begin{align*}
d\hat{J}(u)[h_{u}] &= \langle d_{y}J(y,u), d\mathcal{F}(u)[h_{u}] \rangle + \langle d_{u}J(y,u),h_{u}\rangle \\
&= \langle d_{u}e(y,u)^{*}\xi, h_{u} \rangle +  \langle d_{u}J(y,u), h_{u}\rangle = d_{u}\mathcal{L}(y,u,\xi)[h_u].&
\end{align*} 
Note that we have already computed $ d_{u}J(y,u)$ in Section~\ref{sec:Adjoint_MP}.
This allows us to identify the gradient of the reduced cost functional as
\begin{flalign}
\label{eqn:gradient_reduced_CF}
\nabla \hat{J}(u) = \gamma (u -u_{ref})
-\int_{0}^{T}  \sum_{i} \left[d_{u} F(y_i,u)\right]^T   \xi_{2,i}(t)dt. 
\end{flalign}

\section{Calibration algorithm and results}\label{sec:calibration_results}
To calibrate the control parameters we use real data from the BaSiGo experiment carried out in Düsseldorf in 2013 \cite{Cao_Seyfried_2017, data_archive}. In the following we denote the trajectories from experimental data by $x_{i}^\text{data}:[0,T] \rightarrow  \mathbb{R}^D, {i=1\dots N}.$  For the corridor case, the data we take from file "bi\_corr\_400\_a\_02.txt" in \cite{data_archive}. These show the positions of the pedestrians in the domain $\Omega  = [-6, 6]\times[0, 4.2], $ over time  $t \in [0, 150]$ seconds. For the crossing case, the data we take from file "CROSSING\_90\_E\_2.txt" in \cite{data_archive}. This file provides the positions of the pedestrians in the crossing corridors $\Omega  = {([-5, 5]\times[-1.5, 2])\cap ([-1.2, 2]\times[-5, 5])}, $ over time $t \in [0, 283]$ seconds. However, for the calibration procedure, we use only 8-second intervals for both scenarios. 

\subsection{Numerical schemes and steepest descent algorithm}

In general, the nonlinearities make it rather difficult to solve the optimality system all at one. We therefore opt for an iterative approach to compute the gradient of the calibration problem. Indeed, we first solve the state system \eqref{eqn:state_equation} as considered in Section \ref{Numerical_schemes_state_system}.  Then, we integrate the adjoint system \eqref{eqn:Adjoint_system} with the help of a second-order Runge-Kutta method backward in time. Here, we use the same time steps as for the state problem and transform the time via $s=T-t$ to recover an initial value problem. With the state solution and the adjoint solution, we calculate the gradient using \eqref{eqn:gradient_reduced_CF}, where the integral is approximated with the trapezoidal rule.

We apply a steepest descent algorithm to update the control parameters in every iteration
\begin{flalign}
	\label{eqn:update_control}
	u^{k+1} = u^{k} - \beta_k \cdot \nabla  \hat{J}(u) 
\end{flalign}
where $u^{k}$ denotes the control on current time step, and $\nabla  \hat{J}(u) $ denotes the descent direction and $\beta_k \in \R^3$ is a positive scaling vector. The complete optimization procedure is summarized in Algorithm~\ref{alg:gradient_descent}.

Since we cannot assume that the data and the model are a perfect match, we employ a stochastic gradient descent approach using mini-batches \cite{Li_Zhang_etal_2014}. To obtain the mini-batches from the data trajectories with are given on the interval $[0,T]$ we split the interval into $M$ mini-batches, each of size $b_i$,  $b_i \subset [t_1^i, t_2^i], i=1 \dots m$. Then we randomly select $m<M$ mini-batches for each of the gradient steps.

In more detail, at each iteration we compute the gradients for all the time intervals $b_i, \ { i=1 \dots m}$, and approximate the gradient using the average
$$
{ \nabla  \hat{J}(u)  = \frac{1}{m} \sum_{i=1}^{m} \nabla  \hat{J}_{b_i}(u) }
$$
to update the control via \eqref{eqn:update_control}. The stopping criterion of the calibration algorithm is based on the relative error between the previous and current cost function value denoted by $\epsilon_\text{rel}.$

\begin{algorithm}[H] \label{alg:gradient_descent}
	\SetAlgoLined
	\KwIn{Initial data $(\textbf{x}_0,\textbf{v}_0)$ of the pedestrians and all parameter values, initial guess for the parameters $u_0$}
	\While{$\epsilon_\text{rel} > 10^{-2}$}{
		Solve state system ($\ref{eqn:state_equation}$)\;
		Solve adjoint system ($\ref{eqn:Adjoint_system}$)\;
		Calculate gradient of the reduced cost functional ($\ref{eqn:gradient_reduced_CF}$)\;
		Update control parameters ($\ref{eqn:update_control}$)\;
	}
	\caption{Steepest Descent Algorithm}
	\KwResult{calibrated controls $\bar u$, trajectories $(\bar x, \bar v)$ and their cost $J(\bar y,\bar u)$.}
\end{algorithm}

\subsection{Numerical results}

In this section we discuss numerical results generated with Algorithm $\ref{alg:gradient_descent}$ using experimental data from the Pedestrian Dynamics Data Archive $\cite{data_archive}$. In particular, we retrieve the trajectories from video recordings showing bi-directional and cross-directional flows.

We fix the pedestrian body size $d=0.5$, the velocity scaling $\tau = 1$, attractive potential range $a=1$, repulsive potential range $r=0.3$,  number of pedestrians $N=84$, and desired velocity of agents that go from right to left and from left to right are respectively $w_\text{red} = (-0.7, 0)^T$ and $w_\text{blue}=(0.7, 0)^T$. The value $0.7$ is the average velocity which we extracted from the experimental data. The desired velocities for the crossing scenario are again estimated from the experimental data and set to $w_\text{red} = (0, 1.2)^T$ and $w_\text{blue}=(1.2, 0)^T$.

The time step in the Leap Frog scheme and second order Runge-Kutta method is set to $\Delta t = 0.00625$. Simulations are done in the time interval $t\in[0,8]$. The gradient calculated with $m = 50$ mini-batches of length $|b_{i}| =\Delta t $.  

The initial positions of the agents $\mathbf x_0$ coincide with the initial positions of the experimental data, which are distributed in the domain $[-6,6]\times[0,4.2]$. As the initial velocity of the pedestrians, we set the average velocity from the experimental data. We probably induce some error here, as we do not have the exact values from the data. 

As initial guess of the control parameters we take $u_0=(0, 0, 40)$ for both experiments. We set the regularisation parameters in the cost functional to $\sigma_1 = 1$ and $\sigma_2 = 0,$ we therefore do not need to choose reference values for the control. In this setting we perform 100 iterations with Algorithm $\ref{alg:gradient_descent}$. The step size parameter is set to 
 ${\beta_k = (20, 4000, 4000)}$ to account for the different ranges of the control values.

Figure~\ref{fig:cost_corridor} illustrates the decrease of the cost functional for the corridor case. It starts from approximately 6.51 in the first iteration and terminates around 5.11 in the last iteration. In Figure \ref{fig:cost_crossing} we see the evolution of the cost for the crossing scenario from 9.03 to 7.34. We note that the cost values of the corridor are smaller than the ones of the crossing case. This indicates that the proposed model captures effects of counter-flow better than effects of crossing flow. In addition, we detect that the evolution of the cost is smoother for the crossing case. This could  be for the same reason.

The parameters of the  calibration procedure reached to optimal values $\lambda = -0.17$, $A = 5.56$, and $R=29.23$ for the simulation in the corridor. For simulation at the crossing optimal values reached $\lambda = -0.18$, $A = 10.41$, and $R=29.02$. Interestingly, the optimal $\lambda$ is negative and in the same range in both scenarios. This indicates that the pedestrians in the data set have a tendency to move to the left to avoid collisions. Moreover, the repulsion strengths are similar for both scenarios, but the values for the attraction strengths differ. The difference of the attraction strengths may arise from the post-interaction behaviour of the model. Simulations as reported in \cite{Claudia} show that two interacting agents move on with slightly shifted positions after an collision avoiding interaction. We suspect that this has more impact on the results in the crossing than the corridor scenario. So far, however, this is only a conjecture and needs to be proven or discarded by further investigations.

\begin{figure}[hbt!]
	
	\begin{subfigure}{0.5\textwidth}
		\centering
		\includegraphics[scale=0.4]{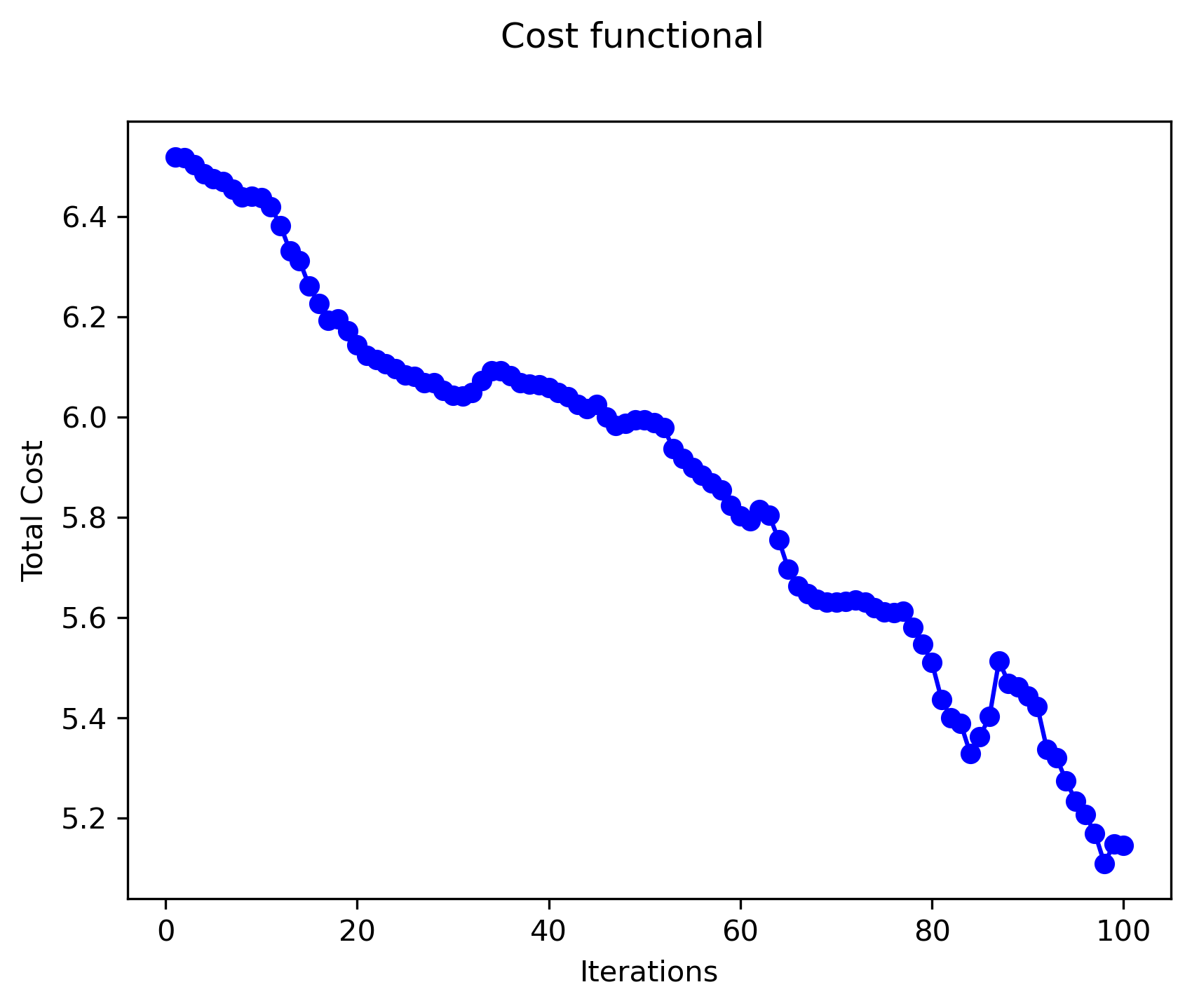}
		\caption{In the corridor}
		\label{fig:cost_corridor}
	\end{subfigure}
	\begin{subfigure}{0.5\textwidth}
		\centering
		\includegraphics[scale=0.422]{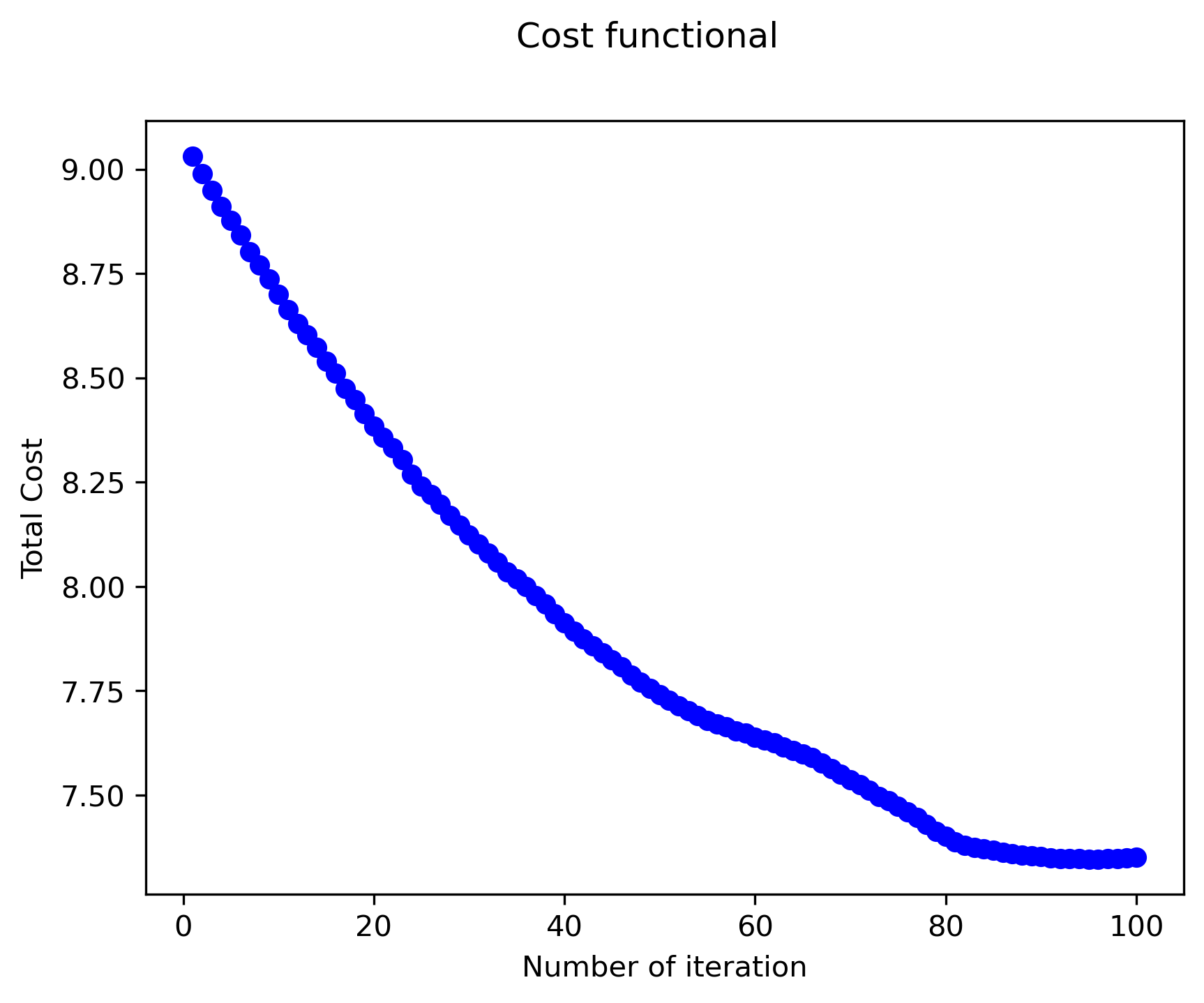}
		\caption{In the crossroad}
		\label{fig:cost_crossing}
	\end{subfigure}
	\caption{Cost functional}
	
\end{figure}

\section{Conclusion and outlook}
We extended the anisotropic interaction model proposed in \cite{Claudia} by including a body size and thus additional volume exclusion effects. Numerical studies indicate that the body size has influence on lane formation process. In fact, it seems that a smaller body size leads to a higher number of lanes formed and vice versa. Moreover, we investigated the fundamental diagram of the dynamics and found that higher densities lead to lower velocities. This observation was expected from other experiments and underlines the feasibility of the approach.

Due to the ODE formulation of the model, we were able to analyse the model in terms of well-posedness and further to rigorously derive a gradient-based descent algorithm for a calibration problem using real data. The optimal scaling parameters for the collision avoidance and the repulsion strength turn out to match very well for the two tested scenarios. For the attraction parameter we find different values for the two scenarios. We suspect that this is related to the post-collision behaviour of the model.

To prove or discard this conjecture is interesting future work. Moreover, the rigorous analysis of stationary states like the lanes in the corridor case and the travelling waves in the crossing case is planned for future investigations.

\begin{appendices}
	
	\section{Tensor  $d_{v_i}M^*(v_i,v_j)$ }
	\label{app:AppendixA}
	
	Here, we present the details of the mathematical manipulations made to tensor $d_{v_i}M(v_i,v_j)$ in the derivation of $d_{v_i}M^*(v_i,v_j)$.
	Note that $d_{v_i}M(v_i,v_j)$ is a tensor with dimension $2\times 2 \times 2 $, given by
	\begin{align}
		\label{eqn:M_derivative}
		d_{v_i}M\left( v_{i}, v_{j}\right) = \begin{pmatrix} 
			-\sin\alpha_{ij} \cdot d_{v_i}\alpha_{ij} & -\cos\alpha_{ij} \cdot d_{v_i}\alpha_{ij}\\
			\cos\alpha_{ij} \cdot d_{v_i}\alpha_{ij} & -\sin\alpha_{ij} \cdot d_{v_i}\alpha_{ij}
		\end{pmatrix}_{2\times 2 \times 2},
	\end{align}
	where
	\begin{align*}
		d_{ v_i} \alpha_{ij} =
		\begin{cases}
			-\lambda \cdot   \frac{1}{ \sqrt{(\norm{v_i}\norm{v_j})^2-\langle v_i,  v_j \rangle^2 }} 
			\cdot \left( v_j  - \langle v_i,  v_j \rangle  \frac{v_i}{\norm{v_i}^2}  \right) , & \text{for } v_i \neq 0, \  v_j \neq 0, \\
			\vec{0},                                                                      & \text{else}
		\end{cases}.
	\end{align*}
	The elements of the tensor $d_{v_i}M\left( v_{i}, v_{j}\right)$ in \eqref{eqn:M_derivative} we denote as 
	$$
	d_{v_i}M(v_i,v_j) = 
	\begin{pmatrix}
		\begin{pmatrix}
			m_{11} \\
			m_{12} 
		\end{pmatrix} 
		& \begin{pmatrix}
			m_{21} \\
			m_{22} 
		\end{pmatrix}  \\
		\begin{pmatrix}
			m_{31} \\
			m_{32} 
		\end{pmatrix} 
		& \begin{pmatrix}
			m_{41} \\
			m_{42} 
		\end{pmatrix}
	\end{pmatrix},
	$$ 
	then its dual $d_{v_i}M^*(v_i,v_j)$ in (\ref{eqn:E_total_v}) is the transposed tensor after swaping its' axes, as given below
	$$
	d_{v_i}M^*(v_i,v_j) = 
	\begin{pmatrix}
		\begin{pmatrix}
			m_{11} \\
			m_{21} 
		\end{pmatrix} 
		& \begin{pmatrix}
			m_{31} \\
			m_{41} 
		\end{pmatrix}  \\
		\begin{pmatrix}
			m_{12} \\
			m_{22} 
		\end{pmatrix} 
		& \begin{pmatrix}
			m_{32} \\
			m_{42} 
		\end{pmatrix}
	\end{pmatrix}.
	$$

	\section{Boundedness of $\nabla M(\cdot)$ }
	\label{app:AppendixB}
	
	We show the boundedness of $\nabla M(v_i, v_j )$, where
	
	\begin{equation*}
		\nabla M(v_i, v_j ) = \left(\frac{dM(v_i, v_j )}{d {v}_i}, \frac{dM(v_i, v_j )}{d {v}_j}\right),
	\end{equation*}
	with
	\begin{equation*}
		\label{eqn:dif_rotation_matrix1}
		\frac{dM}{d {v}_i}= \begin{pmatrix} 
			-\sin\alpha & -\cos\alpha \\
			\cos\alpha & -\sin\alpha 
		\end{pmatrix} \cdot \frac{d\alpha}{d {v}_i} , 
	\end{equation*}
	\begin{equation}
		\label{eqn:dif_alfa1}
		\frac{d\alpha}{d {v}_i} = 
		\begin{cases}
			-\lambda  \frac{1}{ \sqrt{(\norm{ {v_i} }\norm{{v_j} } ) ^2 - \langle {v_i}, {v_j} \rangle^2}}\left( {v_j} - \langle {v_i}, {v_j} \rangle \frac{{v_i}}{\norm{{v_i}}^2}\right) , & \text{if } {v_i}, {v_j}\neq 0 \\
			0,                                                                      & \text{else}
		\end{cases}  .
	\end{equation}
	
	We therefore look transform the system to polar coordinates and introduce the notations ${v_i = {\begin{pmatrix} 
				r_1 \cos \phi_1  \\
				r_1 \sin \phi_1  
			\end{pmatrix}, }}$
	and ${v_j = {\begin{pmatrix} 
				r_2 \cos \phi_2  \\
				r_2 \sin \phi_2  
	\end{pmatrix} } }$. Here $r_1$ and $r_2$ are positive scalars. 
	It is clear, that $\left|{\begin{pmatrix} 
			-\sin\alpha & -\cos\alpha \\
			\cos\alpha & -\sin\alpha 
	\end{pmatrix} }\right|  < \infty$. Then, we need to show $\left| \frac{d\alpha}{d {v}_i} \right| = C< \infty. $ 
	
	For further usage, we define  $${\norm{ v_i}_2  = r_1},\quad {\norm{ v_j}_2  = r_2} \text{ and }  { \langle {v_i}, {v_j} \rangle} = r_1 r_2 (\cos \phi_1 \cos \phi_2  + \sin \phi_1 \sin \phi_2). $$
	Substituting the velocity vectors in (\ref{eqn:dif_alfa1}), we get
	\begin{flalign*}
		\left|\frac{d\alpha}{d {v}_i} \right|_2 &= 
		\left| 
		\frac{-\lambda }{ \sqrt{r_1^2 r_2^2  - (r_1 r_2\cos (\phi_1 - \phi_2) )^2 }}
		\right| 
		\left| 
		\begin{pmatrix} 
			r_2 \cos \phi_2  \\
			r_2 \sin \phi_2  
		\end{pmatrix} - 
		r_1 r_2\cos (\phi_1 - \phi_2) \frac{1}{r_1^2}
		\begin{pmatrix} 
			r_1 \cos \phi_1  \\
			r_1 \sin \phi_1  
		\end{pmatrix}
		\right|_2 \\
		& =  
		\frac{\left| \lambda \right| }{ r_1 r_2 \left|\sin (\phi_1 - \phi_2)  \right| }
		\left| 
		\begin{pmatrix} 
			r_2 \cos \phi_2 - r_2 \cos \phi_1  \cos (\phi_1 - \phi_2) \\
			r_2 \sin \phi_2 - r_2 \sin \phi_1  \cos (\phi_1 - \phi_2) 
		\end{pmatrix} 
		\right|_2\\
		& =  
		\frac{\left| \lambda \right| }{ r_1 r_2 \left|\sin (\phi_1 - \phi_2)  \right| }
		[
		r_2^2 (\cos \phi_2 - \cos \phi_1  \cos (\phi_1 - \phi_2))^2 \\
		&+ r_2^2 ( \sin \phi_2 - \sin \phi_1  \cos (\phi_1 - \phi_2) )^2
		]^\frac{1}{2} \\
		&=  
		\frac{\left| \lambda \right| }{ r_1 r_2 \left|\sin (\phi_1 - \phi_2)  \right| }
		r_2 \sqrt{1 -  \cos^2 (\phi_1 - \phi_2)}  = 
		\frac{\left| \lambda \right| }{ r_1 } < \infty.
	\end{flalign*}
	
	Analogously, we can show that $\left|\frac{d\alpha}{d {v}_j} \right|_2 =\frac{\left| \lambda \right| }{ r_2 }< \infty$. This, proves the boundedness of $\nabla M(v_i, v_j ) $ as desired.

\section{Existence of adjoint states}\label{app:existenceAdjoint}
For completeness we give the assumptions and the statement of Corollary 1.3 of \cite{Rene_Pinnau} which we use to prove the existence of adjoint states in Section~\ref{ExistenceAdjoints}.

\begin{assumption}\label{ass:cor}
Let the following assumptions hold:
\begin{itemize}
	\item [(A1)] $\Uad \subset U$ is nonempty, convec and closed.
	\item [(A2)] $J \colon Y \times U \rightarrow \R$ and $e \colon Y \times U \rightarrow Z$ are continuously Fre\'echet differentialble and $U,Y,Z$ are Banach spaces.
	\item [(A3)] For all $u \in V$ in a neighborhood $V \subset U$ of $\Uad,$ the state equation $e(y,u) = 0$ has a unique solution $y = y(u) \in Y$.
	\item[(A4)] $e_y(y(u),u) \in \mathcal L(Y,Z)$ has a bounded inverse for all $u \in V.$
\end{itemize}
\end{assumption}

\begin{proposition}[Existence of adjoint states \cite{Rene_Pinnau}]\label{prop:adjoint}
	Let $(\bar y, \bar u)$ be an optimal solution of $$\min J(y,u) \text{ subject to } e(y,u)=0$$ and let Assumption~\ref{ass:cor} hold.
	
	Then there exists and adjoint state (or Lagrange multiplier) $\bar p \in Z^*$ such that the following optimality conditions hold
	\begin{align*}
	&\langle e(\bar y,\bar u), p \rangle_{Z,Z^*} = 0 &&\forall p \in Z^*, \\
	&\langle L_y(\bar y, \bar u, \bar p) , v \rangle_{Y^*,Y} = 0 &&\forall v \in Y^*, \\
	&\bar u \in \Uad, \langle L_u(\bar y, \bar u, \bar p) , u-\bar u \rangle_{U^*,U} \ge 0, &&\forall u\in \Uad.
	\end{align*}
	
\end{proposition}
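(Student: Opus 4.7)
The plan is to use the implicit function theorem in Banach spaces to reduce the constrained problem to an unconstrained minimization in $u$ alone, apply the first-order necessary condition for minimization over a closed convex set, and then introduce the adjoint state to recast that condition in the Lagrangian form claimed.

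First I would invoke the implicit function theorem on the equation $e(y,u)=0$ around $(\bar y,\bar u)$. Assumption (A2) supplies the $C^1$ Fréchet regularity of $e$, (A3) gives, for every $u$ in the neighborhood $V$ of $\Uad$, a unique solution $y(u)\in Y$, and (A4) guarantees that the partial derivative $e_y(y(u),u)\in\mathcal{L}(Y,Z)$ is a bijection with bounded inverse. These are exactly the hypotheses needed for the Banach-space implicit function theorem, which then yields that the solution map $u\mapsto y(u)$ is continuously Fréchet differentiable on $V$. Differentiating the identity $e(y(u),u)\equiv 0$ produces the sensitivity formula $y'(u)=-e_y(y(u),u)^{-1}e_u(y(u),u)\in\mathcal{L}(U,Y)$.

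Next I would define the reduced cost functional $\hat{J}(u):=J(y(u),u)$. By the chain rule it is Fréchet differentiable on $V$ with
$$\hat{J}'(u)=J_u(y(u),u)-J_y(y(u),u)\,e_y(y(u),u)^{-1}e_u(y(u),u).$$
Since $\bar u$ minimizes $\hat J$ over the closed convex set $\Uad$, the segment $\bar u+t(u-\bar u)$ lies in $\Uad$ for $t\in[0,1]$ and every $u\in\Uad$, so the one-sided directional derivative of $\hat J$ at $\bar u$ in the direction $u-\bar u$ must be nonnegative. This yields the variational inequality $\langle \hat J'(\bar u),u-\bar u\rangle_{U^*,U}\ge 0$ for all $u\in\Uad$.

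The adjoint state is introduced precisely to eliminate the inverse $e_y^{-1}$ from this expression. I would define $\bar p\in Z^*$ as the unique solution of the dual equation $e_y(\bar y,\bar u)^*\bar p=-J_y(\bar y,\bar u)$, which is well posed because the Banach adjoint of a boundedly invertible operator is itself boundedly invertible. By construction $L_y(\bar y,\bar u,\bar p)=J_y(\bar y,\bar u)+e_y(\bar y,\bar u)^*\bar p=0$, which is the second optimality condition, and substituting back yields $\hat J'(\bar u)=J_u(\bar y,\bar u)+e_u(\bar y,\bar u)^*\bar p=L_u(\bar y,\bar u,\bar p)$, so the variational inequality becomes the third condition; the first condition is immediate from feasibility $e(\bar y,\bar u)=0$. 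The main obstacle is the clean invocation of the Banach-space implicit function theorem, which is exactly where (A2) and (A4) must cooperate, together with the transfer of bounded invertibility from $e_y$ to $e_y^*$; once these are in place, everything else is a short algebraic rearrangement built on the duality pairing.
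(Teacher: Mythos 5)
Your proof is correct. Note, however, that the paper does not prove this proposition at all: it is quoted verbatim (in Appendix~C) as Corollary~1.3 of the cited reference of Hinze, Pinnau, Ulbrich and Ulbrich, and the paper's actual work consists in verifying the hypotheses (A1)--(A4) for its specific state operator. Your argument --- implicit function theorem via (A2)--(A4) to get a $C^1$ control-to-state map, the variational inequality for the reduced functional $\hat J$ over the closed convex set $\Uad$, and the adjoint equation $e_y(\bar y,\bar u)^*\bar p=-J_y(\bar y,\bar u)$ (solvable since $e_y^*$ inherits bounded invertibility from $e_y$) to eliminate $e_y^{-1}$ --- is precisely the standard proof given in that reference, so it fills in exactly what the paper delegates to the citation.
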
	
	
\end{appendices}

\section*{Acknowledgement}
ZT acknowledges funding by the German Academic Exchange Service programme "Mathematics in Industry and Commerce, MIC".

%\newpage

%%%%%%%%%%%%

\newpage

\end{document}